\documentclass[12pt,a4paper]{amsart}
\usepackage{amsmath,amsfonts,amsthm,amsopn,color,amssymb,enumitem}
\usepackage[a4paper]{geometry}
\geometry{top=4.0cm, bottom=4cm, left=3cm, right=3cm}
\usepackage{palatino}
\usepackage{graphicx}
\usepackage[colorlinks=true]{hyperref}
\hypersetup{urlcolor=blue, citecolor=red, linkcolor=blue}
\allowdisplaybreaks

\usepackage{cite}
\usepackage{relsize}
\usepackage{esint}
\usepackage{verbatim}
\usepackage{mathrsfs}
\usepackage{xcolor}
\usepackage{xfrac}

\newcommand{\e}{\varepsilon}
\newcommand{\emb}{\hookrightarrow}

\newcommand{\R}{\mathbb{R}}
\newcommand{\N}{\mathbb{N}}

\newcommand{\de}{\partial}

\DeclareMathOperator{\dv}{div}
\DeclareMathOperator{\tr}{tr}

\DeclareMathOperator{\dist}{dist}

\newcommand{\Sph}{{\mathbb{S}}}

\newcommand{\HH}{\mathcal{H}}

\newcommand{\hsp}{\hspace{0.2cm}}

\newcommand{\intI}{\int_{\Sph^4_+}}
\newcommand{\intB}{\int_{\Sph^3}}
\newcommand{\fixed}{\mathcal F_\de}

\renewcommand{\(}{\left(}
\renewcommand{\)}{\right)}

%From this work

% NEW DEFINITIONS

\newcommand{\sff}{\mathbb I}

\newcommand*{\scal}[1]{\left\langle #1 \right\rangle}

\newcommand*{\abs}[1]{\left\vert #1\right\vert}
\newcommand*{\norm}[1]{\left\Vert #1\right\Vert}

\newtheorem{theorem}{Theorem}[section]
\newtheorem*{theorem*}{Theorem}
\newtheorem{lemma}[theorem]{Lemma}

\newtheorem{proposition}[theorem]{Proposition}

\theoremstyle{definition}

\title[Conformal metrics with symmetric $Q$ and $T$ curvatures]{Conformal metrics on the four-dimensional half sphere with symmetric $Q$ and $T$ curvatures}

\author{Sergio Cruz-Blázquez}
\address{Sergio Cruz Blázquez, Departamento de Análisis Matemático, Universidad de Granada, Avenida de Fuente Nueva s/n, 18071 Granada (Spain)}
\email{sergiocruz@ugr.es}
\author{Azahara DeLaTorre}
\address{Azahara DeLaTorre, Dipartimento di Matematica Guido Castelnuovo, Sapienza Università di Roma, Piazzale Aldo Moro 5, 00185 Roma (Italy)}
\email{azahara.delatorrepedraza@uniroma1.it}
\subjclass{Primary: 35B53, 35B38, 35G30}
% Please provide a minimum of 5 keywords or phrases.
\keywords{High-order differential operators, curvature prescription problems, Q curvature, T curvature, conformal geometry}

\begin{document}
\maketitle
\begin{abstract}
In this paper, we address the problem of prescribing non-constant $Q$ and boundary $T$ curvatures on the upper hemisphere $\Sph^4_+\subset \R^5$, via a conformal change of the background metric. This is equivalent to solve a fourth-order non-linear elliptic boundary value problem with a third-order non-linear equation and homogeneous Neumann conditions at the boundary. The problem admits a Mean-field type variational formulation, similar to the one obtained by Cruz-Blázquez and Ruiz in \cite{cruzruiz2018} for a related problem in two dimensions, with the associated energy functional being bounded from below but, in general, not coercive. By imposing symmetry conditions, we are able to prove the existence of minimizers, especially when $Q,T\geq 0$. To the best of our knowledge, these are the first existence results obtained for this setting.
\end{abstract}

\section{Introduction}
Given a closed Riemannian surface $(M, g)$, and a smooth function $K$ defined on $M$, the classical Kazdan-Warner problem asks whether $K$ can be realized as the Gaussian curvature of $M$ with respect to a metric in the conformal class of $g$. If we consider a conformal metric of the form $\tilde{g}=e^{2u}g$, then the problem reduces to solving the following Liouville equation on $M$:
\begin{equation}
	\label{eq:gaussian}-\Delta_{g}u+K_g=Ke^{2u}\hsp \text{in } M,
\end{equation}
where $-\Delta_g$ denotes the Laplace-Beltrami operator and $K_g$ is the Gaussian curvature of $(M,g)$. The solvability of \eqref{eq:gaussian} has been studied for a long time, so we refer the interested reader to \cite{Aubin-Variedad_sin_borde} for a detailed overview of the problem. The case of the sphere $\Sph^2$ is known as the Nirenberg's problem and it is especially delicate because of the non-compactness of the Möbius group.

\smallskip 

If the surface has a boundary $\partial M,$ a natural boundary condition to consider is prescribing also the boundary geodesic as a given function $h$. Finding a conformal metric $\tilde{g}=e^{2u}g$ with Gaussian curvature $K$ and boundary geodesic curvature $h$ results equivalent to finding a solution $u$ to the boundary value problem
\begin{align}\label{Kh}
	\left\lbrace \begin{array}{rll}
		-\Delta_{g}u+K_g&=Ke^{2u}, & \mbox{in } M, \\[0.2cm]
	\frac{\partial u}{\partial \nu}+h_g&= he^{u} & \mbox{on } \partial M,
	\end{array}\right. 
\end{align}
where $h_g$ is the original geodesic curvature and $\nu$ denotes the exterior unit normal to $\de M$. Integrating and applying the Gauss-Bonnet theorem, we find a topological invariant linking $K$ and $h$:
\begin{equation*}%\label{eq:GB}
	\int_M Ke^{2u}dV_g+\int_{\de M}he^uds_g= \int_{M}K_g\,dV_g+\int_{\de M}h_gds_g=2\pi\chi(M),
\end{equation*}
where $\chi(M)$ denotes the Euler-Poincaré characteristic of $M$. The case of the disk $\mathbb D^2$ can be seen as the analogue of the Nirenberg's problem in this framework, and is again particularly challenging. Via the uniformization theorem, we may assume that $g$ is the Euclidean metric, so the problem \eqref{Kh} becomes

\begin{align}\label{Khdisk}
	\left\lbrace \begin{array}{rll}
		-\Delta u&=Ke^{2u}, & \mbox{in } \mathbb D^2, \\[0.2cm]
		\frac{\partial u}{\partial \nu}+1&= he^{u} & \mbox{on } \partial \mathbb D^2,
	\end{array}\right. 
\end{align}
 under the constraint
 \begin{equation}\label{eq:gbdisk}
 	\int_{\mathbb D^2} Ke^{2u}dV_g + \int_{\de \mathbb D^2}he^{u}ds_g =2\pi,
 \end{equation}
 which forces one of the curvatures to be positive somewhere. For the most recent advances on \eqref{Khdisk}, see \cite{ruiz} and the references therein.
 
 \medskip
 
If $(M,\de M, g)$ is a Riemannian manifold of higher dimension, the geometry becomes richer and one can consider the prescription of different contractions of the curvature tensor. In dimensions $n\geq 3$, one natural analogue to \eqref{Kh} is the prescription of interior scalar curvature and boundary mean curvature $S$ and $H$, respectively, via a conformal change of the metric. For convenience, in this case one searches for a conformal metric $\tilde g=u^{4\over n-2}g$, with $u>0$, which yields to finding a positive solution of 
\begin{align}\label{Yamabe-bdary}
	\left\lbrace \begin{array}{rll}
		-\frac{4(n-1)}{n-2}\Delta_{g}u+S_gu&=Su^{\frac{n+2}{n-2}} & \mbox{in } M, \\[0.2cm]
			\frac{2}{n-2}\frac{\partial u}{\partial \nu}+h_gu&= H u^{n\over n-2} & \mbox{on } \partial M.
	\end{array}\right. 
\end{align}

A general presentation of this problem, known as the boundary Yamabe or Escobar's problem, and a collection of classical results when either $S=0$ or $H=0$ can be found in \cite{BrendleChen}.

\medskip

To explore further conformal and topological properties of curvatures in dimension two, a differential operator $\mathcal P$, closely related to the Laplace-Beltrami operator and mapping functions to 2-forms, was introduced, leading to the definition of the $Q$-curvature (as outlined in \cite{Qcurvature}). In 1991, Branson and {\O}rsted generalized this concept to dimension four, preserving similarities to the scalar curvature properties in dimension two. This was later extended to arbitrary even dimensions, resulting in extensive literature on its construction and properties.

\smallskip

We will now focus on a compact and closed Riemannian manifold $(M,g)$ of dimension $n=4$. The $Q$ curvature introduced by Branson in \cite{branson1985} (see also \cite{BransonOrsted}) is associated to the  \textit{Paneitz operator} $P^4_g$, a differential operator defined by Paneitz in \cite{paneitz1983} which is invariant under conformal transformation. They are defined in terms of the Ricci tensor $\text{Ric}_g$ and the scalar curvature $S_g$ as

\begin{align*}
	P^4_gf &= -\Delta^2_g f + \dv_g\left[\left(\frac{2}{3}S_g g- 2\text{Ric}_g\right)(\nabla f, \cdot)\right]^\sharp, \\
	Q_g &= \frac{-1}{12}\left(\Delta_g S_g-R^2_g+3\vert \text{Ric}_g\vert^2\right),
\end{align*}
for every differentiable function $f$ on $M$, where $^\sharp$ denotes the canonical isomorphism $\sharp:T^*M\to TM$. Similarly to the Laplace Beltrami operator and the Gaussian curvature, the Paneitz operator governs the conformal change of the metric Q. Indeed, under a conformal transformation of the metric given by $\tilde g = e^{2v}g,$ it holds,

\begin{align*}
	P^4_g u + 2Q_g = 2Q_{\tilde{g}}e^{4u} \mbox{ on } M. \label{Q}
\end{align*}

As before, one could ask if, given a function $Q$ defined on $M$, there exists a conformal metric $\tilde g$ such that $Q_{\tilde g}=Q$. The answer is positive when $Q$ is constant and $M$ is not conformally equivalent to the $4-$sphere, see \cite{brendle2003, djadli-malchiodi-Q1, chang-yang-Q}. The analogue of the Nirenberg's problem on $\Sph^4$ is again intrinsically complex and a complete description is still missing. Some partial existence results are available in \cite{wei-xu1998, malchiodi-struwe2006,LiLiLiu} for positive functions $Q$. On the other hand, a Kazdan-Warner type obstruction to the existence of solutions was obtained in \cite{chang-yang-Q}. 

\medskip When the manifold $M$ has a boundary, a differential operator $P_g^3$ defined on $\de M$ was discovered by Chang and Qing in \cite{chang-qing1997}, leading to the $T$ curvature, a third order curvature denoted by $T_g$. They are defined as
\begin{equation*}%\label{p3}
\begin{split} 
P^3f &= -\frac12 \frac{\de \Delta_gf}{\de \nu_g}-\Delta_{\hat g}\frac{\de f}{\de\nu_g} +2H_g\Delta_{\hat g}f -\sff_g(\nabla_{\hat g}f,\nabla_{\hat g}f)\\&-\nabla_{\hat g}H_g\nabla_{\hat g}f +\left(\frac 13 S_g-\sum_{a=1}^n{R^a}_{nan}\right)\frac{\de f}{\de \nu_g},
\end{split}
\end{equation*}
and
\begin{equation*}%\label{T-curv}
T_g = \frac1{12}\frac{\de S_g}{\de\nu_g}-\frac12 S_g H_g+\sum_{a,b=1}^{n-1}R_{anbn}(\sff_g)_{ab}-3{H_g}^3+\frac13 \tr (\sff_g)^3-\Delta_{\hat g}H_g,
\end{equation*}
where $\hat g$ stands for the induced metric by $g$ on $\de M$, $\sff_g$ is the second fundamental form of $\de M$ and $R$ is the Riemann curvature tensor. We point out that $(P^3_g,T_g)$ depends not only on the intrinsic geometry of $\de M$, but also on the role of $\de M$ in $M$ (see \cite{chang-qing1997}).

\medskip

On a compact Riemannian 4-manifold with boundary, the conformal transformation of $(Q,T)$ is governed by the couple $(P^4_g,P_g^3)$. Indeed, for the conformal change of metric $\tilde g = e^{2v}g$ we have

\begin{align}
	\left\lbrace \begin{array}{ll}
		P_gv+2Q_g=2Q_{\tilde{g}}e^{4v} & \mbox{in } M \\[0.2cm]
		P^3_g{v}+T_g= T_{\tilde{g}}e^{3v} & \mbox{on } \partial M.
	\end{array}\right. \label{QT}
\end{align}
%%%%%
Therefore, in Conformal Geometry, it is natural to consider the pair $(Q_g,T_g)$ as the generalization of the curvatures $(K_g,h_g)$ in a Riemannian surface. In addition to these resemblances, we have the extension of the Gauss-Bonnet theorem, known as the Gauss-Bonnet-Chern theorem:
\begin{equation}\label{GBC}
\int_M\left(Q_g+\frac{1}{8}\vert W_g\vert^2\right)dV_g+\int_{\partial M}(T_g+Z_g)dS_g = 4\pi^2\chi(M),
\end{equation}
where $W_g$ denotes the Weyl tensor of $(M,g)$, and $Z_g$ is a pointwise conformal invariant operator that vanishes if $\de M$ is totally geodesic.

\medskip Prescribing $Q$ and $T$ curvatures on $M$ and $\de M$ respectively, consists in solving \eqref{QT} with $Q_{\tilde g}=Q$ and $T_{\tilde g}=T$.  This is a fourth order problem with a third order boundary condition, so it is natural to impose an additional first order condition on the boundary. One example that we can find in the literature is to prescribe that the boundary is minimal, that is,
$$\frac{\partial v}{\partial \nu} + H_gu = 0 \hsp \mbox{on } \partial M,$$

Thanks to a result by Escobar \cite{escobar}, it is not restrictive to assume that $H_g=0$, as it can always be achieved through a conformal change of the background metric. 
 
\smallskip
 
 To the best of our knowledge, the prescription of both $Q$ and $T$ curvatures has not been much considered. Existence results for a wide range of manifolds and some compactness results are derived in \cite{chang-qing1997-II,ndiaye2008, ndiaye2011} for the case of constant $Q$ and $T=0$, and in \cite{ndiaye2009} for the prescription of constant $T$-curvature under $Q=0$ (see also \cite{CaseEtAl} for a more recent approach on manifolds with corners). However, none of these results cover the case of a locally conformally flat manifold with totally geodesic boundary and positive integer Euler-Poincaré characteristic. 

\medskip 
In this work, we focus on the latter critical case using the model of the standard upper hemisphere $(\Sph^4_+,\Sph^3,g_0)$, with $\Sph^4_+=\{x\in \R^5:\abs{x}=1,\: x_5\geq0\}$. By direct computation, we have 
\begin{equation*}%\label{eq:P4}
	P^4_g=\Delta_g^2-2\Delta_g\end{equation*} and $Q_g=3$. Moreover, the embedding $\de \Sph^4_+=\Sph^3\emb S^4_+$ is totally geodesic, so $\sff_g=0$. This gives $T_g=0$ and
\begin{equation*}%\label{eq:P3}
	P^3_gu = -\frac12 \frac{\de \Delta_g u}{\de \nu_g}\end{equation*} for every $u$ such that $\frac{\de u}{\de \nu_g}=0$. Therefore, we are led to solve the following high-order Liouville boundary equation:
\begin{equation}\label{problem}\tag{QT}
	\left\lbrace\begin{array}{rll}
\Delta_g^2 u - 2\Delta_gu + 6 &= 2Qe^{4u} & \text{in}\quad \Sph^4_+, \\[0.5ex]
-\frac{\de}{\de \nu_g}\Delta_gu &= 2T e^{3u}  & \text{on}\quad \Sph^3, \\[0.5ex]
\frac{\de u}{\de \nu_g} &=0 & \text{on}\quad \Sph^3.
	\end{array} \right.
\end{equation}
Furthermore, since $\chi(\Sph^4_+) = \chi(\mathbb{B}^4) = 1,$ \eqref{GBC} becomes
\begin{equation}\label{GBC-sphere}
	\intI Qe^{4u}dV_g + \intB Te^{3u}ds_g = 4\pi^2.
\end{equation}

As hinted by the pairs of equations, \eqref{problem}-\eqref{Khdisk} and \eqref{eq:gbdisk}-\eqref{GBC-sphere}, this problem shares many similarities with the problem of prescribing Gaussian and boundary geodesic curvatures on a disk of $\R^2$, which in turn is related to the Nirenberg problem on the sphere $\Sph^2$. As it happens in the two-dimensional case, the non-compactness of the group of conformal transformations of $\Sph^4_+$ gives rise to \textit{bubbling} of solutions (see \cite{CaseEtAl}).

\smallskip

As a first step in understanding this problem, here we address \eqref{problem} from the point of view of the Calculus of Variations, taking advantage of the Mean-field type formulation found in \cite{cruzruiz2018} for its analogue in two-dimensions. Henceforth, we fix $Q\in C^2(\Sph^4_+)$ and $T\in C^2(\Sph^3)$ to be positive somewhere functions, and we define the parameter $\beta$ formally from \eqref{GBC-sphere} in the following way
\begin{equation*}
	\beta = \intI Qe^{4u}dV_g = 4\pi^2-\intB Te^{3u}ds_g.
\end{equation*}
For every $u$ such that the integrals above are positive, we show in Section \ref{sec:varfor} that \eqref{problem} results equivalent to the following problem for the couple $(u,\beta)$, with $\beta\in(0,4\pi^2)$.

\begin{equation}\label{equivalent}
	\left\lbrace\begin{array}{rll}
		\Delta_g^2 u - 2\Delta_gu + 6 &= 2\beta \dfrac{Qe^{4u}}{\intI Qe^{4u}dV_g} & \text{in}\quad \Sph^4_+, \\[0.5ex]
		-\dfrac{\de}{\de \nu_g}\Delta_gu &= 2(4\pi^2-\beta) \dfrac{T e^{3u}}{\intB Te^{3u}ds_g}  & \text{on}\quad \Sph^3, \\[0.5ex]
		\dfrac{\de u}{\de \nu_g} &=0 & \text{on}\quad \Sph^3, \\[0.5ex]
		\left(\dfrac{4\pi^2-\beta}{\intB Te^{3u}ds_g}\right)^{4/3} &= \dfrac{\beta}{\intI Qe^{4u}dV_g} & \text{in}\quad (0,4\pi^2).
	\end{array} \right.
\end{equation}

Problem \eqref{equivalent} can be seen as the Euler-Lagrange equation of the energy functional 
\begin{equation*}
	\begin{split}
		I(u,\beta) &= \intI (\Delta_gu)^2+2\intI\abs{\nabla_g u}^2+12\intI u-\beta \log\intI Qe^{4u} \\&-\frac43 (4\pi^2-\beta)\log \intB Te^{3u} + \beta\log\beta +\frac43(4\pi^2-\beta)\log(4\pi^2-\beta)+\frac\beta3,
	\end{split}
\end{equation*} 
for which we can provide energy estimates using higher-order Moser-Trudinger type inequalities. 

\medskip

Indeed, one of the main goals of this work is to prove sharp inequalities for the case of the upper hemisphere. These inequalities are available for $\Sph^4$ in \cite{beckner}, but their adaptation to the boundary case requires specific techniques. In this regard, we emphasize that the direct application of the results obtained in \cite{ndiaye2008,ndiaye2009} for general manifolds with boundary is not sufficient in our case, due to the loss of a small $\e>0$ (see Section \ref{sec:MT}).

\medskip

The use of the aforementioned inequalities yields that the Euler-Lagrange functional is uniformly bounded from below, but as it happens in the two-dimensional case studied in \cite{cruzruiz2018}, it is not certain that a global minimizer exists. In the spirit of \cite{moser73}, we should impose symmetry assumptions to gain coercivity.

\medskip

More precisely, we let $\mathcal G$ be a group of symmetries of $\Sph^4_+$, and let $\mathcal F_{\de}$ denote the set of boundary points fixed by the action of $\mathcal G$, that is, 
\begin{equation}\label{fixed-points}
\fixed = \left\lbrace x\in \Sph^3: \varphi(x)=x\hsp\forall \varphi\in \mathcal G \right\rbrace.
\end{equation}

We are particularly interested in symmetry groups such that $\mathcal F_\de=\emptyset$ or $\mathcal F_\de=\Sph^k$, with $k=0,1,2$, up to an isometry of $\R^5$. This condition will be assumed for the rest of the paper.

\smallskip

  Examples of these groups are, for instance, ciclic groups generated by a rotation of two coordinates around the $x_5-$axis with minimal angle ${2\pi\over k}$, with $k\in \mathbb Z$ and $k\geq 2$ ($\mathcal F_\de \simeq \Sph^1$), the product of two of these groups ($\mathcal F_\de = \emptyset$), groups generated by a reflection with respect to a hyperplane that leaves $x_5$ invariant ($\mathcal F_\de \simeq \Sph^2$), or product groups of the form $\text{Id}\times O(3)$ ($\mathcal F_\de \simeq \Sph^0$).

\medskip

Our main result is the following:

\begin{theorem}\label{maintheorem} Let $Q\in C^2(\Sph^4_+)$ and $T\in C^2(\Sph^3)$ be non-negative and $\mathcal G$-symmetric functions, not both of them identically equal to zero. Assume that $T=Q=0$ along $\mathcal F_\de $. Then, problem \eqref{problem} admits a solution.
\end{theorem}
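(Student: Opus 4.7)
The plan is to apply the direct method of the calculus of variations to the energy functional $I(u,\beta)$ associated to the equivalent formulation \eqref{equivalent}, working in the subspace of $\mathcal G$-symmetric functions. Let $H^2_{\de,\mathcal G}(\Sph^4_+)$ denote the subspace of $\Hd$ consisting of functions invariant under the action of $\mathcal G$. By the principle of symmetric criticality of Palais, any critical point of the restriction of $I$ to $H^2_{\de,\mathcal G}(\Sph^4_+)\times(0,4\pi^2)$ is automatically a critical point of $I$ on the full space, and hence, via the equivalence established in Section \ref{sec:varfor}, gives rise to a solution of \eqref{problem}.

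The central task is to prove that $I$ attains a global minimum in the symmetric class. The Moser--Trudinger type inequalities of Section \ref{sec:MT} guarantee that $I$ is bounded from below, but they do not grant coercivity: a minimizing sequence could develop concentration of the measures $Qe^{4u}\,dV_g$ or $Te^{3u}\,ds_g$. Under the symmetry assumption, any such blow-up must take place at a $\mathcal G$-orbit, and on the boundary this forces concentration to lie inside $\fixed$. The hypothesis $Q=T=0$ along $\fixed$ ensures that the prescribed curvatures vanish exactly where boundary concentration can occur, and combined with the improved constant in the symmetric Moser--Trudinger inequality---obtained by effectively identifying $\mathcal G$-orbits to a single representative---this upgrades the lower bound on $I$ to coercivity on $H^2_{\de,\mathcal G}(\Sph^4_+)\times[\e,4\pi^2-\e]$ for every fixed $\e>0$. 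The strategy is in the spirit of \cite{moser73}.

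Once coercivity is available, the conclusion follows by a standard compactness argument. Exploiting the translation invariance $I(u+c,\beta)=I(u,\beta)$---a direct consequence of $\text{vol}(\Sph^4_+)=4\pi^2/3$---I would normalize a $\mathcal G$-symmetric minimizing sequence $(u_n,\beta_n)$ to have zero average. Coercivity then yields boundedness in $\Hd$, so up to a subsequence $u_n \weakto u_\ast$ weakly and $\beta_n\to\beta_\ast\in[0,4\pi^2]$. Ruling out the endpoints $\beta_\ast\in\{0,4\pi^2\}$---which correspond, respectively, to all the mass being on the boundary or in the interior---should follow from the uniformity in $\e$ of the coercivity estimate together with the strict Moser--Trudinger inequalities. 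Lower semicontinuity of the quadratic part, combined with strong convergence of the exponential nonlinearities (from compact Sobolev embedding into Orlicz spaces below the critical threshold) then shows that $(u_\ast,\beta_\ast)$ is a minimizer. Writing out the Euler--Lagrange system recovers \eqref{equivalent}, and standard elliptic regularity upgrades $u_\ast$ to a classical solution of \eqref{problem}.

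The main obstacle will be the sharp symmetric Moser--Trudinger inequality with the enlarged constant, especially when $\fixed=\Sph^k$ with $k=1,2$ is positive-dimensional. In those cases the admissible concentration locus is itself a submanifold rather than a discrete set, so a localized profile analysis is needed to quantify how the vanishing rate of $Q$ and $T$ near $\fixed$ combines with the symmetry to absorb the loss of $\e>0$ inherent in the general-manifold inequalities of \cite{ndiaye2008,ndiaye2009}. An additional difficulty, specific to the coupled $(Q,T)$ setting and absent from the classical single-curvature Nirenberg problem, is that the interior measure $Qe^{4u}\,dV_g$ and the boundary measure $Te^{3u}\,ds_g$ must be controlled simultaneously, with the balance between them encoded in $\beta$ and preserved throughout the coercivity argument.
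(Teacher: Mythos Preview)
Your overall strategy---restrict to the $\mathcal G$-symmetric subspace, use Palais, and obtain coercivity of $I_\beta$ via a symmetry-improved Moser--Trudinger inequality with the hypothesis $Q=T=0$ on $\fixed$ killing concentration near the fixed-point set---matches the paper's approach. The covering argument (Proposition~\ref{pr:cases}) and the localized inequalities (Propositions~\ref{local-int1}--\ref{local-int3}) implement exactly the mechanism you sketch, and in fact yield coercivity with leading constant $\tfrac12-\e$ \emph{uniformly} in $\beta\in(0,4\pi^2)$, not only on $[\e,4\pi^2-\e]$.

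The genuine gap is your exclusion of the endpoints $\beta_\ast\in\{0,4\pi^2\}$. You write that this ``should follow from the uniformity in $\e$ of the coercivity estimate together with the strict Moser--Trudinger inequalities,'' but neither ingredient addresses the $\beta$-variable: coercivity only bounds $\|u_n\|_{H^2}$, and the Moser--Trudinger inequalities concern the spatial profile of $u$. Nothing you have written prevents $\beta_n\to 0$ with $u_n\to u_\ast$ and $I(u_n,\beta_n)\to I_0(u_\ast)=\alpha$. The paper's mechanism is entirely different and hinges on the explicit term $\mathfrak f(\beta)=\beta\log\beta+\tfrac43(4\pi^2-\beta)\log(4\pi^2-\beta)+\tfrac\beta3$ in the functional: since $\mathfrak f'(\beta)\to-\infty$ as $\beta\to 0^+$, one has $I(u_0,0)-I(u_0,\beta_n)\sim -\beta_n\log\beta_n>0$ for any fixed $u_0$, so the endpoint cannot be a minimum in $\beta$. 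To make this comparison rigorous the paper first proves (Theorems~\ref{th:existenceQ} and~\ref{th:existenceT}) that the limiting functionals $I_0$ and $I_{4\pi^2}$ themselves attain their infima at some $u_0,\,u_{4\pi^2}$; the inequality $m(0)-m(\beta_n)\geq I(u_0,0)-I(u_0,\beta_n)>0$ then gives the contradiction. Your proposal neither isolates the role of $\mathfrak f$ nor establishes the auxiliary endpoint minimizers, so the argument as written does not close.
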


The idea of the proof goes as follows: let $(u_n,\beta_n)$ be a minimizing sequence for $I$, where $u_n$ are $\mathcal G-$symmetric functions in a suitable subspace of $H^2(\Sph^4_+)$, and $\beta_n\in(0,4\pi^2)$. For every fixed $\beta$, we show that the functional $u\to I(u,\beta)$ admits a global minimizer $u_\beta$. This is done by studying the possible regions into which the masses of the sequences $Qe^{4u_n}$ and $Te^{3u_n}$ are distributed. By symmetry, the regions away from $\mathcal F_\de$ with mass appear in duplicate, allowing us to apply improved versions of the Moser-Trudinger inequalities, known as Chen-Li type inequalities (see \cite{chen-li91}), and so gaining coercivity. If $\mathcal F_\de \neq \emptyset$, a minimizing sequence could concentrate around the fixed points, preventing the splitting of mass. This possibility is dismissed with the additional assumption that $Q=T=0$ on $\mathcal F_\de$, as done in \cite{liu-xu} to prescribe a geodesic curvature on the boundary of a disk that is symmetric under a reflection.

\medskip 

The possibility $\mathcal F_\de\neq \emptyset$ makes the study more delicate, and it becomes necessary to provide localized versions of the sharp Moser-Trudinger type inequalities that are more suitable to work with the weights $Q$ and $T$.

\medskip

To conclude, we show $\beta_n\to\beta\in (0,4\pi^2)$ via energy estimates. This argument requires a detailed study of the limiting cases $\beta=0$ and $\beta=4\pi^2$, corresponding with prescribing $T\geq 0$ with $Q=0$ and viceversa. In these cases, we are able to remove the non-negativity assumption when $\mathcal F_\de = \emptyset$. This is carried out in Section \ref{sec:lc}.

\medskip

We remark that, when $\mathcal F_\de = \emptyset$, our results can be seen as counterparts of the ones found in \cite{cruzruiz2018} for the Gaussian-geodesic prescription problem. On this subject, it is important to note that the adaptation of some of the methods used in that work required techniques that are specific to dimension four, and which can not be directly extended to the more complex case $\mathcal F_\de \neq \emptyset$.

\medskip

We finally point out the relation between \eqref{QT} and the boundary Yamabe problem \eqref{Yamabe-bdary}. 
The definition of a one-parameter family of operators, called conformal fractional Laplacians $P^s_g,$ (see \cite{GZ}) gave raise to a family $Q_s$ of intrinsic curvatures with good conformal properties. Here $s\in(0,\frac{n}{2})$, and $s\in N$ needs to be treated separately. In general, $Q_s$ with $s\notin N$ are non-local quantities, but $Q_1$ and $Q_2$ are, up to normalization constant, the scalar and $Q-$ curvature \cite{ChangQ}. The prescription of these curvatures is known as fractional Yamabe problem. The original definition of the conformal fractional Laplacian is related to a Dirichlet-to-Neumann operator (see \cite{ChangGonzalez}), thus the PDE formulation can be treated through an elliptic extension problem of the type \eqref{Yamabe-bdary} with $S=0$ (see also \cite{CaseChang} or \cite{Case} for higher order extensions). Therefore, the boundary equation \eqref{QT} with $\tilde Q_{\tilde{g}}\equiv 0$ can be seen as the limiting case (as $s\to\frac{n}{2}$) of the prescribed $Q_{\bar g}^s=T$ curvature equation (see \cite[Section 6]{DGHM}).
\section*{Acknowledgements}
S.C. acknowledges financial support from the Spanish Ministry of Universities and Next Generation EU funds, through a \textit{Margarita Salas} grant from the University of Granada and by the FEDER-MINECO Grant PID2021-122122NB-I00. 

\smallskip

A. DlT. acknowledges financial support from the Spanish Ministry of Science and Innovation (MICINN), through the IMAG-Maria de Maeztu Excellence Grant CEX2020-001105-M/AEI/ 10.13039/501100011033 and FEDER-MINECO Grants PID2021- 122122NB-I00 and PID2020-113596GB- I00; RED2022-134784-T, funded by MCIN/AEI/10.13039/501100011033. She is also supported by Fondi Ateneo – Sapienza Università di Roma and PRIN (Prot. 20227HX33Z).  

\smallskip

Both authors have been partially supported by the IN$\delta$AM-GNAMPA projects 2023 CUP E53C2200193000 and 2024 CUP E53C23001670001. They both acknowledge financial support from J. Andalucia (FQM-116). Part of this work was carried out during A. DlT.'s visits to the University of Granada and S.C's visit to the Sapienza Università di Roma, to which both authors are grateful. 
\section{Notation and preliminaries}
In this section we establish the notation we will be using throughout the paper, and state some technical results we will refer to. 

\medskip

\subsection{Notation} Given a subset $A$ of a metric space $(X,d)$ and $t>0$, $(A)^t$ will denote the tubular neighborhood 
\begin{equation*}
	(A)^t = \left\lbrace x\in X:\: d(x,A)<t\right\rbrace.
\end{equation*} 

In the case of a $4$-dimensional compact manifold with boundary $(M,\de M,g)$, given $\rho>0$ and $p\in M$, we denote by $B^4_\rho(p)$ the geodesic ball centered at $p$ of radius $\rho$. Instead, if $p\in \de M$, we will use $B^3_\rho(p)$ to denote the corresponding ball in $\de M$ with respect to the induced distance. 

\medskip

For the sake of simplicity, when we work on $\Sph^4_+$ with its standard metric, we drop the volume or area element in our integrals, as well as the sub-indexes depicting the dependence on the metric $g$. For instance, we simply write

\begin{equation*}
	\intI (\Delta_g u)^2dV_g = \intI (\Delta u)^2.
\end{equation*}

\smallskip

Moreover, in our computations, $C$ will be used to denote a positive constant that may change from line to line, or even within the same line. Moreover, arbitrarily small constants $\e>0$ will be relabelled with no specific mention. 

\smallskip

Finally, it is helpful to remember that $\text{Vol}^4(\Sph^4_+)=\frac{4\pi^2}{3}$ and $\text{Vol}^3(\Sph^3)=2\pi^2$.

\subsection{Functional spaces} Let $(M,\de M,g)$ be a $4-$dimensional compact manifold with boundary. We denote
\begin{equation*}
	H^2_\de(M)=\left\lbrace u\in H^2(M):\:\frac{\de u}{\de\nu_g}=0\hsp\text{on}\hsp \de M\right\rbrace.
\end{equation*}
Similarly to \cite{ndiaye2008,ndiaye2009,ndiaye2011}, we define the operator $P^{4,3}_g$ through
\begin{equation*}
	\begin{split}
		\scal{P^{4,3}_gu,v}_{L^2(M)} &= \int_M\Delta_gu\Delta_g v\:dV_g + \frac23\int_M S_g \nabla_gu\nabla_g v\:dV_g \\&-2\int_M \text{Ric}_g\left(\nabla_gu,\nabla_gv\right)\:dV_g -2\int_{\de M}\sff_g\left(\nabla_{\hat g}u\nabla_{\hat g}v\right)\:ds_g,
	\end{split}
\end{equation*}
for $u,v\in H^2_\de(M)$. Under certain conditions, the following result ensures that it induces an equivalent norm to the standard one in $H^2_\de(M)$.

\begin{lemma}[{{\cite[Lemma 2.9]{ndiaye2008}}}]\label{norm-equiv} Suppose that $P^{4,3}_g$ is a non-negative operator with \newline $\ker P^{4,3}_g\simeq \R$. Then, the map
\begin{equation*}
u\mapsto  \(\scal{P^{4,3}_gu,u}_{L^2(M)}\)^{\frac12}
\end{equation*}
	is an equivalent norm to $\norm{\cdot}_{H^2(M)}$ on $H^2_\de(M)$.
\end{lemma}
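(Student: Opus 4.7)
The stated equivalence cannot hold literally on all of $H^2_\delta(M)$, since $\sqrt{\langle P^{4,3}_g u, u\rangle}$ vanishes on the nonzero constants in $\ker P^{4,3}_g$ while $\|\cdot\|_{H^2(M)}$ does not. Following the usage in \cite{ndiaye2008}, the plan is to interpret the equivalence on the $L^2$-orthogonal complement of the kernel,
\[
\widetilde{H}^2_\delta(M) = \left\{u \in H^2_\delta(M) :\; \int_M u\, dV_g = 0\right\},
\]
with the statement on the full space following after adding the harmless rank-one term $\bigl(\int_M u\, dV_g\bigr)^2$ to the quadratic form. The strategy is then a standard two-step argument: one direction by direct estimation of the bilinear form, and the other by a weak compactness/contradiction scheme that uses the kernel hypothesis crucially.

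For the upper bound $\langle P^{4,3}_g u, u\rangle \leq C\|u\|_{H^2(M)}^2$ I would estimate each of the four terms in the definition separately. The leading term $\int_M (\Delta_g u)^2\, dV_g$ is bounded by $\|u\|_{H^2(M)}^2$ by definition. The two interior lower-order pieces involving $S_g$ and $\mathrm{Ric}_g$ are handled by Cauchy--Schwarz together with the uniform boundedness of these curvature tensors on the compact manifold $M$. The boundary contribution with $\sff_g$ is controlled using the continuous trace embedding $H^2(M) \hookrightarrow H^1(\partial M)$ and the boundedness of the second fundamental form.

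For the coercivity direction, I would argue by contradiction: assume there is a sequence $(u_n) \subset \widetilde{H}^2_\delta(M)$ with $\|u_n\|_{H^2(M)} = 1$ and $\langle P^{4,3}_g u_n, u_n\rangle \to 0$. By reflexivity, up to a subsequence $u_n \weakto u_\infty$ weakly in $H^2_\delta(M)$; by Rellich--Kondrachov and the compact trace embedding, $u_n \to u_\infty$ strongly in $H^1(M)$ and in $H^1(\partial M)$. Non-negativity of $P^{4,3}_g$ gives weak lower semicontinuity of the associated quadratic form, so $\langle P^{4,3}_g u_\infty, u_\infty\rangle = 0$, which together with $\ker P^{4,3}_g \simeq \mathbb{R}$ forces $u_\infty$ to be constant; passing the zero-mean condition to the limit then gives $u_\infty = 0$. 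At this point the three lower-order terms in $\langle P^{4,3}_g u_n, u_n\rangle$ all converge to zero by strong $H^1(M)$ and $H^1(\partial M)$ convergence, so $\int_M (\Delta_g u_n)^2 \to 0$ as well.

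The main obstacle, and the step where the boundary condition $\partial u/\partial \nu = 0$ is indispensable, is to convert this last piece of information into $\|u_n\|_{H^2(M)} \to 0$. I would invoke the standard elliptic $H^2$-regularity estimate for the Neumann Laplacian,
\[
\|u\|_{H^2(M)}^2 \leq C\left(\|\Delta_g u\|_{L^2(M)}^2 + \|u\|_{H^1(M)}^2\right), \qquad u \in H^2_\delta(M),
\]
whose proof on a compact manifold with boundary relies on the smoothness of $g$ and of $\partial M$. Combined with $\|u_n\|_{H^1(M)} \to 0$, this yields $\|u_n\|_{H^2(M)} \to 0$, contradicting $\|u_n\|_{H^2(M)} = 1$ and closing the argument.
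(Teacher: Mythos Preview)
The paper does not supply its own proof of this lemma; it is simply quoted from \cite[Lemma 2.9]{ndiaye2008}. Your argument is the standard one and is correct: the upper bound by direct estimation of each term in the bilinear form, and coercivity by a weak--compactness contradiction combined with the Neumann $H^2$--regularity estimate $\|u\|_{H^2(M)}\leq C\bigl(\|\Delta_g u\|_{L^2(M)}+\|u\|_{H^1(M)}\bigr)$ on $H^2_\de(M)$. Your observation that the equivalence must be read modulo constants is also well taken and matches the way the lemma is actually used throughout the paper, where minimizing sequences are always normalized to zero mean via Proposition~\ref{pr:addition-constants}.
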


\smallskip

In the case $M=\Sph^4_+$, the operator $P^{4,3}$ reduces to the simpler expression
\begin{equation*}%\label{def-p43}
	\scal{P^{4,3}u,u}_{L^2(\Sph^4_+)} = \intI \left(\Delta u\right)^2 + 2\intI \abs{\nabla u}^2.
\end{equation*} 
Clearly, $P^{4,3}$ is a nonnegative operator on $H^2_\de(\Sph^4_+)$ and $\ker P^{4,3}\simeq \R$. 

\medskip

Now, we introduce the natural spaces of definition for our energy functionals. Let $Q\in C^2(\Sph^4_+)$ and $T\in C^2(\Sph^3)$ be positive somewhere functions. We set
\begin{align*}
	\HH &= \left\lbrace u\in H^2_\de(\Sph^4_+):\intI Qe^{4u}>0,\:\intB Te^{3u}>0\right\rbrace, \\
	\HH^0 &= \left\lbrace u\in H^2_\de(\Sph^4_+):\intB Te^{3u}>0\right\rbrace, \quad\text{and}\\
	\HH^{4\pi^2} &= \left\lbrace u\in H^2_\de(\Sph^4_+):\intI Qe^{4u}>0\right\rbrace.
\end{align*}
Similarly to \cite[Lemma 2.1]{cruzruiz2018}, one can show that these sets are non-empty. Moreover, observe that, if $Q$ and $T$ are non-negative and positive somewhere, we have $\mathcal H = \mathcal H^0=\mathcal H^{4\pi^2} = H^2_\de(\Sph^4_+)$. We will also consider the Hilbert subspace of $H^2_\de(\Sph^4_+)$ formed by the $\mathcal G-$symmetric functions, 
\begin{equation*}
	H^2_{\mathcal G}(\Sph^4_+) = \bigg\lbrace u\in H^2_\de(\Sph^4_+):  u(\varphi) = u \quad \forall \varphi \in \mathcal G \bigg\rbrace,
\end{equation*}
and denote by $\HH_{\mathcal G}$, $\HH^0_{\mathcal G}$ and $\HH^{4\pi^2}_{\mathcal G}$ the corresponding intersections of the spaces defined above with $H^2_{\mathcal G}(\Sph^4_+)$. 

\medskip

\subsection{A useful covering of $\overline{\Sph^4_+}$} Next, we construct a suitable covering of $\Sph^4_+$ and its boundary that will allow us to control the distribution of the mass of $\mathcal G$-symmetric functions.

\medskip 
Recall that $\mathcal F_{\de}$ denote the set of boundary points fixed by the action of $\mathcal G$ as stated in \eqref{fixed-points}.
First, if $\mathcal F_\de\neq \emptyset$, we fix a small enough $\delta>0$ and consider the geodesic neighborhood of $\mathcal F_\de$ of radius $\delta$ on $\Sph^3$, that is,
\begin{equation}\label{eq:neig}
	U_{\delta}=\{x\in\Sph^3: \text{dist}(x,\mathcal F_\de)<\delta\}.
\end{equation}
Instead, if $\mathcal F_\de = \emptyset$, we just take $U_\delta=\emptyset$. In both cases, we clearly have $U_\delta\neq \Sph^3$ because of our assumption on $\mathcal G$. Now, we restrict ourselves to the compact set $\Sph^3\setminus U_{\delta/2}$. By compactness, there exists $\rho>0$ and a finite collection of points $\{p_1,\ldots,p_N\}$ such that $\Sph^3\setminus U_{\delta/2}\subset \bigcup_{i=1}^NB^3_\rho(p_i).$ Finally, for every $i=1,\ldots,N$, we have $p_i\notin\mathcal F_\de$, so there exists an element $\varphi_i\in \mathcal G$ such that $\varphi_i(p_i)\neq p_i.$

\begin{lemma}\label{lem:cov} Let us call $A_i=B^3_{\rho}(p_i)$ for $i=1,\ldots,N$. The set $$\mathcal A=\left\lbrace U_{\delta}, A_1, \ldots, A_N, \varphi_1(A_1),\ldots,\varphi_N(A_N)\right\rbrace$$ constructed above is a finite open covering of $\Sph^3$. Moreover, if $\rho>0$ is small enough, there exists $\e>0$ such that
	$$\text{dist}(A_i, \varphi_i(A_i))>\e \hsp\text{for all }i=1,\ldots,N.$$
\end{lemma}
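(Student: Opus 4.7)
The plan is to verify the two assertions separately and essentially straight from the construction preceding the statement.

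For the covering property, I would argue as follows. By the very choice of $\rho$ and $\{p_1,\dots,p_N\}$, the geodesic balls $A_1,\dots,A_N$ cover the compact set $\Sph^3\setminus U_{\delta/2}$. Since $U_{\delta/2}\subset U_\delta$, we get $\Sph^3 = U_\delta\cup \bigcup_{i=1}^N A_i$, so \emph{a fortiori} $\mathcal A$ is an open cover. Each $A_i$ is open by definition, $U_\delta$ is open as the preimage of $(-\infty,\delta)$ under the continuous distance function to the closed set $\mathcal F_\de$, and each $\varphi_i(A_i)$ is open because $\varphi_i$ is an isometry (hence a homeomorphism) of $\Sph^3$; finiteness is immediate. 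Note that the sets $\varphi_i(A_i)$ play no role for covering purposes, they are appended to $\mathcal A$ only for use in later Chen--Li-type estimates.

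For the quantitative separation, the key observation is that, for each $i$, the points $p_i$ and $\varphi_i(p_i)$ are distinct by the way $\varphi_i$ was chosen, so $d_i := \text{dist}(p_i,\varphi_i(p_i))>0$. Setting
\begin{equation*}
\rho_0 := \tfrac{1}{3}\min_{1\le i\le N} d_i > 0,
\end{equation*}
I would restrict attention to $\rho\in (0,\rho_0)$. Since $\varphi_i$ is an isometry, $\varphi_i(A_i) = B^3_\rho(\varphi_i(p_i))$. The triangle inequality then gives, for any $x\in A_i$ and $y\in \varphi_i(A_i)$,
\begin{equation*}
d(x,y) \ge d(p_i,\varphi_i(p_i)) - d(x,p_i) - d(y,\varphi_i(p_i)) > d_i - 2\rho \ge d_i - 2\rho_0 \ge \tfrac{1}{3} d_i.
\end{equation*}
Taking the minimum over $i$, one obtains
\begin{equation*}
\text{dist}(A_i,\varphi_i(A_i)) \ge \varepsilon \quad \text{with} \quad \varepsilon := \tfrac{1}{3}\min_{1\le i\le N}d_i > 0,
\end{equation*}
which is the claimed uniform separation.

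There is no real obstacle in this argument; the only point where one must be slightly careful is the finite minimum over $i$, which is well-defined precisely because the covering $\{A_1,\dots,A_N\}$ is finite (ultimately a consequence of the compactness of $\Sph^3\setminus U_{\delta/2}$). If $\mathcal F_\de = \emptyset$, the same proof applies with $U_\delta = \emptyset$ and covering the whole compact sphere $\Sph^3$ directly by balls $A_i$. Hence one can simply output $\rho=\rho_0/2$ (say) and $\varepsilon = \tfrac{1}{3}\min_i d_i$, concluding the lemma.
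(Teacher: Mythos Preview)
Your proof is correct and follows essentially the same approach as the paper: both argue that the covering is immediate from the construction and obtain the separation from $\varphi_i(p_i)\neq p_i$ together with the finiteness of the index set. Your version is actually more explicit and cleaner, since you use that $\varphi_i$ is an isometry to identify $\varphi_i(A_i)=B^3_\rho(\varphi_i(p_i))$ and then a straightforward triangle-inequality estimate, whereas the paper's one-line appeal to ``continuity of $\varphi_i$'' only literally yields $\varphi_i(x)\neq x$ near $p_i$ and still needs an argument like yours to upgrade to a uniform positive distance between the two balls.
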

\begin{proof}
	$\mathcal A$ is a finite covering of $\Sph^3$ by construction. Now, fix $p_i\in\Sph^3\setminus U_{\delta/2}$. Since $p_i\notin\mathcal F_\de $, there exists $\varphi_i\in \mathcal G$ such that $\varphi_i(p_i)\neq p_i$. By the continuity of $\varphi_i$, this property holds true in a small neighborhood of $p_i$ of radius $r_i>0$. Therefore, it is sufficient to take $\rho<\min \left\lbrace r_i: i=1,\ldots,N \right\rbrace.$
\end{proof}
Next, we extend this construction to a finite covering of $\Sph^4_+$ using tubular neighborhoods. We fix a small enough $t>0$ and consider, for every $B\subset \mathcal \Sph^3$, the subset of $\Sph^4_+$ given by
\begin{equation*}
	(B)^t = \{x\in \Sph^4_+:\dist(x,B)<t\}.
\end{equation*}
Reasoning as before, the following is a finite covering of $\overline{(\Sph^3)^{t/2}}$ in $\Sph^4_+:$
\begin{equation*}
	\mathcal B = \left\lbrace(U_\delta)^t, (A_1)^t,\ldots, (A_N)^t, \varphi_1((A_1)^t),\ldots,\varphi_N((A_N)^t)\right\rbrace.
\end{equation*}
Thus, calling $\Omega=\Sph^4_+\setminus \bigcup_{B\in\mathcal B}B,$ we have the following property:
\begin{lemma}\label{lem:cov2} The set $\mathcal B \cup \{\Omega\}$, with $\mathcal B$ as described above, is a finite covering of $\Sph^4_+$. Moreover, if $\rho$ and $t$ are small enough, there exists $\e>0$ such that
	\begin{equation*}
		\begin{split}
			\dist\left((A_i)^t,\varphi_i((A_i)^t)\right)&>\e\hsp\text{for all } i=1,\ldots,N, \\
			\dist(\Omega,\de \Sph^4_+) &>\e.
		\end{split}
	\end{equation*} 
\end{lemma}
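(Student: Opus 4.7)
The plan is to verify the three assertions of the lemma in turn, relying only on Lemma \ref{lem:cov}, the isometric character of the elements of $\mathcal G$, and the fact that $\Sph^3\subset \Sph^4_+$ is totally geodesic (it is the equator of the hemisphere). The covering property is immediate: by definition $\Omega=\Sph^4_+\setminus\bigcup_{B\in\mathcal B} B$, so $\mathcal B\cup\{\Omega\}$ is a cover of $\Sph^4_+$. Only the two distance estimates require work, and both are obtained by comparing tubular neighborhoods in $\Sph^4_+$ with the boundary picture from Lemma \ref{lem:cov}.

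For the gap between $(A_i)^t$ and $\varphi_i((A_i)^t)$, I would first note that each $\varphi_i\in\mathcal G$ is an isometry of $\Sph^4_+$, so tubular neighborhoods are equivariant: $\varphi_i((A_i)^t)=(\varphi_i(A_i))^t$. By Lemma \ref{lem:cov}, after shrinking $\rho$ if necessary, there exists $\e_0>0$ such that $\dist_{\Sph^3}(A_i,\varphi_i(A_i))>\e_0$ for every $i$. Because $\Sph^3$ is totally geodesic in $\Sph^4_+$, geodesics between boundary points lie on $\Sph^3$, hence this boundary distance coincides with the ambient distance in $\Sph^4_+$. For any $x\in(A_i)^t$ and $y\in(\varphi_i(A_i))^t$, the triangle inequality gives $\dist(x,y)\geq \e_0-2t$, which is positive as soon as $t<\e_0/3$.

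For the bound $\dist(\Omega,\de\Sph^4_+)>\e$, I would show the inclusion $\bigcup_{B\in\mathcal B} B\supset (\Sph^3)^{t/2}$. Indeed, if $x\in\Sph^4_+$ satisfies $\dist(x,\Sph^3)<t/2$ and $y\in\Sph^3$ is a closest point to $x$, Lemma \ref{lem:cov} provides some $A\in\mathcal A$ with $y\in A$, so $\dist(x,A)<t/2<t$ and $x\in(A)^t\in\mathcal B$. Taking complements, $\Omega\subset\Sph^4_+\setminus(\Sph^3)^{t/2}$, and therefore $\dist(\Omega,\de\Sph^4_+)\geq t/2$. Combining both estimates, the choice $\e=\min\{\e_0-2t,\,t/2\}$ concludes the proof. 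There is no serious obstacle; the only point that merits care is the passage between the intrinsic distance on $\Sph^3$ used in Lemma \ref{lem:cov} and the ambient distance in $\Sph^4_+$ used to define tubular neighborhoods, which is handled by the totally geodesic embedding.
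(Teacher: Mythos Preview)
Your proof is correct and essentially follows the approach the paper intends: the paper does not spell out a proof of this lemma, merely remarking ``Reasoning as before'' (i.e., as in Lemma~\ref{lem:cov}) and stating that $\mathcal B$ covers $\overline{(\Sph^3)^{t/2}}$, from which the statement is immediate. You have simply made those details explicit---the equivariance $\varphi_i((A_i)^t)=(\varphi_i(A_i))^t$ via the isometry property, the triangle-inequality estimate $\dist\big((A_i)^t,(\varphi_i(A_i))^t\big)\geq\e_0-2t$, and the inclusion $(\Sph^3)^{t/2}\subset\bigcup_{B\in\mathcal B}B$---which is precisely the intended argument.
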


The main purpose of the previous covering is the following alternative for the distribution of the masses of $\mathcal G-$symmetric functions.
\begin{proposition}\label{pr:cases}
	Let $Q$ and $T$ be non-negative functions that are positive somewhere, and $u_n$ be a sequence in $H^2_{\mathcal G}(\Sph^4_+)$. Up to taking a subsequence, there exists $\gamma\in (0,1)$ such that one of the following holds true:
	\begin{enumerate} 
		\item[(i)]
		\begin{equation*}\int_{(U_\delta)^t}Qe^{4u_n}\geq \gamma\intI Qe^{4u_n}, \hsp\text{or}
		\end{equation*}
		\item[(ii)] there exists $i\in\{1,\cdots, N\}$ with $(A_i)^t\in\mathcal B$ such that 
		\begin{equation*}\int_{(A_i)^t}Qe^{4u_n}=\int_{\varphi_i\((A_i\)^t)}Qe^{4u_n}\geq \gamma\intI Qe^{4u_n}, \hsp\text{or}
		\end{equation*}
		\item[(iii)] 
		\begin{equation*}
			\int_{\Omega}Qe^{4u_n}\geq \gamma\intI Qe^{4u_n}.
		\end{equation*}
	\end{enumerate}
	Moreover, there exists $\lambda\in (0,1)$ such that it holds
	\begin{enumerate} 
		\item[(a)]
		\begin{equation*}\int_{U_\delta}Te^{3u_n}\geq \lambda\intB Te^{3u_n}, \hsp\text{or}
		\end{equation*}
		\item[(b)] there exists $i\in\{1,\cdots, N\}$ with $A_i\in\mathcal A$ such that 
		\begin{equation*}\int_{A_i} Te^{3u_n}=\int_{\varphi_i(A_i)}Te^{3u_n}\geq \lambda \intB Te^{3u_n}.
		\end{equation*}
	\end{enumerate}
\end{proposition}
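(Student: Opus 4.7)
The plan is to combine $\mathcal G$-symmetry with a pigeonhole argument on the finite coverings of Lemmas \ref{lem:cov} and \ref{lem:cov2}. The crucial symmetrization step is the observation that, since each $\varphi_i$ belongs to the isometry group $\mathcal G$ and both $u_n$ and $Q$ are $\mathcal G$-symmetric, the change of variable $x = \varphi_i(y)$ yields
\begin{equation*}
\int_{\varphi_i((A_i)^t)} Q e^{4u_n}\,dV_g = \int_{(A_i)^t} Q e^{4u_n}\,dV_g, \qquad i = 1,\ldots,N,
\end{equation*}
and the analogous boundary identity $\int_{\varphi_i(A_i)} T e^{3u_n}\,ds_g = \int_{A_i} T e^{3u_n}\,ds_g$. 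Without this observation, the pigeonhole argument below would only single out one member of each orbit $\{(A_i)^t, \varphi_i((A_i)^t)\}$, whereas the statement requires both.

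Next, subadditivity of the integral over the covering $\mathcal B \cup \{\Omega\}$ of $\Sph^4_+$ gives
\begin{equation*}
\intI Q e^{4u_n} \leq \int_{(U_\delta)^t} Q e^{4u_n} + \sum_{i=1}^N \int_{(A_i)^t} Q e^{4u_n} + \sum_{i=1}^N \int_{\varphi_i((A_i)^t)} Q e^{4u_n} + \int_\Omega Q e^{4u_n}.
\end{equation*}
This is a sum of $2N+2$ non-negative terms, so by pigeonhole at least one of them is bounded below by $\frac{1}{2N+2}\intI Q e^{4u_n}$. If the dominant term corresponds to $(U_\delta)^t$ we are in case (i); if to $\Omega$, in case (iii); if to $(A_i)^t$ or $\varphi_i((A_i)^t)$, the symmetrization identity above forces both inequalities in (ii) to hold simultaneously for that index $i$.

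Since the number of alternatives is finite (at most $N+2$, with a finite choice of index $i$ within case (ii)), a diagonal extraction provides a subsequence along which the same alternative, and the same index $i$ when relevant, occurs for every $n$. Taking $\gamma = \frac{1}{2N+2}$ proves the first part. The boundary dichotomy is obtained by the identical argument applied to the covering $\mathcal A$ of $\Sph^3$, which consists of $2N+1$ sets, giving $\lambda = \frac{1}{2N+1}$; a further subsequence extraction produces a common subsequence on which both conclusions hold.

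Aside from this modest subsequence bookkeeping, the only substantive step is the symmetrization identity; once it is in place, the remainder is straightforward combinatorics, so I do not anticipate a serious obstacle.
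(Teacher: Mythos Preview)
Your proof is correct and follows essentially the same pigeonhole-on-a-finite-covering argument as the paper. If anything, your version is slightly more careful: you make the symmetrization identity $\int_{\varphi_i((A_i)^t)} Qe^{4u_n} = \int_{(A_i)^t} Qe^{4u_n}$ explicit (the paper's proof never justifies the equality appearing in case~(ii)), and you spell out the subsequence extraction, whereas the paper phrases the argument by contradiction and uses the smaller covering $\Omega\cup(U_\delta)^t\cup\bigcup_i(A_i)^t$ to get $\gamma<(N+2)^{-1}$ rather than your $(2N+2)^{-1}$.
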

\begin{proof}
	Suppose by contradiction that, for each $\gamma\in (0,1)$, there exists $m\in\N$ such that, for every $n\geq m$, one has
	\begin{equation*}
		\begin{split} 
			\int_{(U_\delta)^t}Qe^{4u_n}&<\gamma\intI Qe^{4u_n},\\ \int_{(A_i)^t}Qe^{4u_n}&<\gamma \intI Qe^{4u_n}\hsp \forall i=1,\ldots,N, \quad \text{and}\\
			\int_{\Omega}Qe^{4u_n}&< \gamma\intI Qe^{4u_n}.
		\end{split}	
	\end{equation*}
	We can sum the inequalities above to obtain
	\begin{equation*}
		\int_{\Omega}Qe^{4u_n} +\int_{(U_\delta)^t}Qe^{4u_n}+\sum_{i=1}^N \int_{(A_i)^t}Qe^{4u_n} < \left(2+N\right)\gamma\intI Qe^{4u_n}.
	\end{equation*}
	Now, since $\Sph^4_+ \subset \Omega\cup(U_\delta)^t\cup \bigcup_{i=1}^N (A_i)^t$ and $Q\geq 0$, one has
	\begin{align*}
		0<\intB Qe^{4u_n} &\leq 	\int_{\Omega}Qe^{4u_n} +\int_{(U_\delta)^t}Qe^{4u_n}+\sum_{i=1}^N \int_{(A_i)^t}Qe^{4u_n} \\ &< (2+N)\gamma\intI Qe^{4u_n},
	\end{align*}
	and it is enough to take $\gamma < (2+N)^{-1}$ to reach a contradiction. The second alternative follows from a completely analogous reasoning.
\end{proof}

\section{Variational formulation of the problem}\label{sec:varfor}
In this section we follow \cite[\textsection 2.2]{cruzruiz2018} to rewrite \eqref{problem} in the form of a Mean-field type equation, and derive the corresponding Euler-Lagrange functional. 

\begin{proposition} Assume $Q$ and $T$ are positive somewhere functions, and take $u\in\HH$ and $\beta\in(0,4\pi^2)$. Then, problems \eqref{problem} and \eqref{equivalent} are equivalent.
\end{proposition}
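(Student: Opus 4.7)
The plan is to prove the two directions of the equivalence separately, both hinging on a Gauss--Bonnet--Chern type identity that links the two mass integrals.

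\smallskip

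\emph{Forward direction.} Assume $u\in\HH$ solves \eqref{problem} and set $\beta:=\intI Qe^{4u}$. I would first establish \eqref{GBC-sphere} by integrating the interior equation of \eqref{problem} over $\Sph^4_+$: two applications of the divergence theorem combined with the Neumann condition $\frac{\de u}{\de \nu_g}=0$ eliminate $\intI\Delta_g u$, and the third-order boundary equation gives $\intI \Delta_g^2 u=\intB \frac{\de}{\de\nu_g}\Delta_g u=-2\intB Te^{3u}$. Together with $\mathrm{Vol}^4(\Sph^4_+)=4\pi^2/3$, this yields $\intI Qe^{4u}+\intB Te^{3u}=4\pi^2$, so $\beta\in(0,4\pi^2)$ (both integrals are positive since $u\in\HH$) and $\intB Te^{3u}=4\pi^2-\beta$. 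Multiplying and dividing the right-hand sides of \eqref{problem} by these quantities immediately produces the first three equations of \eqref{equivalent}, while the fourth reduces to the identity $1^{4/3}=1$.

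\smallskip

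\emph{Converse direction.} Suppose $(u,\beta)$ solves \eqref{equivalent}, and introduce the multipliers
\[
\lambda_1:=\frac{\beta}{\intI Qe^{4u}}, \qquad \lambda_2:=\frac{4\pi^2-\beta}{\intB Te^{3u}},
\]
so that the first two equations of \eqref{equivalent} become $\Delta_g^2 u - 2\Delta_g u + 6 = 2\lambda_1 Qe^{4u}$ and $-\frac{\de}{\de\nu_g}\Delta_g u = 2\lambda_2 T e^{3u}$, while the fourth equation reads exactly $\lambda_2^{4/3}=\lambda_1$. Choose $c\in\R$ with $e^{4c}=\lambda_1$; the fourth equation then forces $e^{3c}=\lambda_1^{3/4}=\lambda_2$. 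Since $\Delta_g$ commutes with additive constants, the shifted function $\tilde u:=u+c$ satisfies $\Delta_g^2\tilde u-2\Delta_g\tilde u+6=2\lambda_1 e^{-4c}Qe^{4\tilde u}=2Qe^{4\tilde u}$ and, analogously, $-\frac{\de}{\de\nu_g}\Delta_g\tilde u=2Te^{3\tilde u}$, together with $\frac{\de\tilde u}{\de\nu_g}=0$. Hence $\tilde u$ solves \eqref{problem}.

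\smallskip

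The main subtlety is the mismatch between the translation invariance of \eqref{equivalent} (a direct computation shows all four equations are preserved by $u\mapsto u+c$) and the non-invariance of \eqref{problem}. The role of the fourth equation in \eqref{equivalent} is precisely to encode the scaling compatibility $\lambda_1^3=\lambda_2^4$ that allows a single additive constant to absorb both multipliers simultaneously into the exponentials $e^{4u}$ and $e^{3u}$; without this relation, the normalizations at the interior and on the boundary would be decoupled and no common shift would recover \eqref{problem}.
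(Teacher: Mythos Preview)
Your proof is correct and follows essentially the same route as the paper: the forward direction uses the Gauss--Bonnet--Chern identity \eqref{GBC-sphere} to identify $\beta=\intI Qe^{4u}$ (you additionally sketch how this identity follows from integrating the PDE, which the paper simply cites), and the converse shifts $u$ by a constant chosen so that the two multipliers are absorbed, with the fourth equation of \eqref{equivalent} guaranteeing compatibility. Your discussion of the role of the scaling relation $\lambda_1^3=\lambda_2^4$ is a nice clarification of why the fourth equation is exactly what is needed.
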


\begin{proof} Let $u$ be a solution of \eqref{problem}. Then, by \eqref{GBC-sphere}, $\left(u,\intI Qe^{4u}dV_g\right)$ is a solution of \eqref{equivalent}. Conversely, if $(u,\beta)$ solves \eqref{equivalent}, it is possible to find a solution of \eqref{problem} of the form $u+C$, with $C\in\R.$ Indeed, the first two equations read as
\begin{equation*}
\begin{split}
\Delta_g^2 u - 2\Delta_gu + 6 &=  \dfrac{\beta e^{4C}}{\intI Qe^{4u}dV_g}2Qe^{4u}, \\
-\frac{\de}{\de \nu_g}\Delta_gu &=  \dfrac{(4\pi^2-\beta)e^{3C}}{\intB Te^{3u}ds_g}2T e^{3u}.
\end{split}
\end{equation*}
To recover \eqref{problem}, we need
\begin{equation*}
e^{C}=\left(\dfrac{\intI Qe^{4u}dV_g}{\beta}\right)^{1/4} = \left(\dfrac{\intB Te^{3u}ds_g}{4\pi^2-\beta}\right)^{1/3}.
\end{equation*}
The fourth equation of \eqref{equivalent} guarantees that the condition established by the second equality above always holds true. Hence, it suffices to take \newline $C=\frac14\log \left(\frac1\beta \intI Qe^{4u}dV_g\right)$.
\end{proof}

\begin{proposition}\label{euler-lagrange} Weak solutions of \eqref{problem} can be obtained as critical points of the energy functional
	\begin{equation}\label{functional} 
		\begin{split}
		I(u,\beta) &= \intI (\Delta_gu)^2+2\intI\abs{\nabla_g u}^2+12\intI u-\beta \log\intI Qe^{4u} \\&-\frac43 (4\pi^2-\beta)\log \intB Te^{3u} +\mathfrak f(\beta),
		\end{split}
	\end{equation} 
defined for $\beta\in [0,4\pi^2]$ and  $u\in \HH$ if $\beta\in (0,4\pi^2)$, $u\in \HH^0$ if $\beta = 0$ or $u\in \HH^{4\pi^2}$ if $\beta=4\pi^2$. Here, $\mathfrak f$ is the continuous extension to $[0,4\pi^2]$ of the function 
\begin{equation}\label{def:f}
\beta\mapsto \beta\log\beta +\frac43(4\pi^2-\beta)\log(4\pi^2-\beta)+\frac\beta3.
\end{equation} \qed
\end{proposition}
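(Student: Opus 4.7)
The proof is essentially a direct Euler--Lagrange computation, comparing $\delta I/\delta u$ and $\partial I/\partial \beta$ with the four equations in \eqref{equivalent}. Since the previous proposition already showed that \eqref{problem} and \eqref{equivalent} are equivalent for $\beta\in(0,4\pi^2)$ and $u\in\HH$, it suffices to verify that any critical point $(u,\beta)$ of $I$ on the appropriate space solves \eqref{equivalent} weakly (and, in the limiting cases $\beta=0,4\pi^2$, that the corresponding reduced problem is recovered).

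First I would differentiate $I$ in the direction of an arbitrary test function $v\in H^2_\de(\Sph^4_+)$. The quadratic part $\intI(\Delta u)^2+2\intI|\nabla u|^2 = \langle P^{4,3}u,u\rangle_{L^2(\Sph^4_+)}$ gives
\begin{equation*}
\frac{d}{dt}\Big|_{t=0}\!\!\intI(\Delta(u+tv))^2+2|\nabla(u+tv)|^2 \;=\; 2\intI \Delta u\,\Delta v + 4\intI \nabla u\cdot\nabla v.
\end{equation*}
Using twice the Green identity together with the Neumann condition $\partial_{\nu_g}u=\partial_{\nu_g}v=0$ built into $H^2_\de(\Sph^4_+)$, this reduces to
\begin{equation*}
2\intI(\Delta^2 u-2\Delta u)\,v \;-\; 2\intB \frac{\partial\Delta u}{\partial\nu_g}\,v.
\end{equation*}
The remaining terms of $I$ contribute the linear piece $12\intI v$, plus the logarithmic pieces whose $u$-derivatives are $-4\beta\,\dfrac{\intI Qe^{4u}v}{\intI Qe^{4u}}$ and $-4(4\pi^2-\beta)\,\dfrac{\intB Te^{3u}v}{\intB Te^{3u}}$.

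Next I would localize: choosing $v$ compactly supported in the interior of $\Sph^4_+$ kills both boundary integrals and forces
\begin{equation*}
\Delta_g^2 u-2\Delta_g u+6 \;=\; 2\beta\,\frac{Qe^{4u}}{\intI Qe^{4u}}\quad\text{in }\Sph^4_+,
\end{equation*}
which is the first equation of \eqref{equivalent}. Feeding this back and then testing against an arbitrary $v\in H^2_\de$ with nonzero trace on $\Sph^3$ isolates the boundary term and yields
\begin{equation*}
-\frac{\partial \Delta_g u}{\partial\nu_g} \;=\; 2(4\pi^2-\beta)\,\frac{Te^{3u}}{\intB Te^{3u}}\quad\text{on }\Sph^3,
\end{equation*}
the second equation. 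The third equation $\partial_{\nu_g}u=0$ is built into the definition of $H^2_\de(\Sph^4_+)$ and therefore comes for free.

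Finally I would differentiate in $\beta$. A short computation gives $\mathfrak f'(\beta)=\log\beta-\tfrac{4}{3}\log(4\pi^2-\beta)$, so that
\begin{equation*}
\partial_\beta I \;=\; -\log\!\intI Qe^{4u} + \tfrac{4}{3}\log\!\intB Te^{3u} + \log\beta - \tfrac{4}{3}\log(4\pi^2-\beta).
\end{equation*}
Setting this to zero and exponentiating recovers precisely the fourth (compatibility) equation of \eqref{equivalent}. The boundary cases $\beta=0$ and $\beta=4\pi^2$ are handled by the same computation after simply dropping the logarithmic term that is no longer finite, noting that the continuity of $\mathfrak f$ at the endpoints makes both the functional and its derivatives in $u$ well defined on $\HH^0$ and $\HH^{4\pi^2}$, respectively. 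There is no real obstacle here; the only delicate bookkeeping is keeping the signs and boundary terms straight when integrating the bilinear form $\langle P^{4,3}u,v\rangle$ by parts twice.
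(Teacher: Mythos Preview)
Your computation is correct and is exactly the standard Euler--Lagrange verification one would expect; the paper itself omits the proof entirely (the statement ends with \qed), so there is no alternative approach to compare against. One small quibble: at the endpoints $\beta=0,4\pi^2$ you cannot set $\partial_\beta I=0$ since $\beta$ is at the boundary of its domain; there the proposition only asserts that critical points of $I_0$ or $I_{4\pi^2}$ in $u$ solve the reduced problems with $Q\equiv0$ or $T\equiv0$, which your $u$-variation already establishes.
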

 
For convenience, given $\beta \in [0,4\pi^2]$, we call $I_\beta$ the functional $u\mapsto I_\beta(u)=I(u,\beta)$. In particular,
\begin{align}
I_0(u) &= \intI (\Delta_gu)^2+2\intI\abs{\nabla_g u}^2+12\intI u -\frac{16\pi^2}{3} \log \intB Te^{3u} +\frac{16\pi^2}{3}\log(4\pi^2), \label{functional0}\\
	I_{4\pi^2}(u) &= \intI (\Delta_gu)^2+2\intI\abs{\nabla_g u}^2+12\intI u -4\pi^2 \intI Qe^{4u} + 4\pi^2\log(4\pi^2)+\frac{4\pi^2}{3}, \label{functional4pi2}
\end{align}
defined for every $u$ in $\HH^0$ and $\HH^{4\pi^2}$ respectively.

\medskip 

We conclude this section with a useful property of the energy functional \eqref{functional}, which will allow us to only consider functions with zero mean value.
\begin{proposition}\label{pr:addition-constants}
For every $\beta\in [0,4\pi^2]$, the functional $I_\beta$ is invariant under the addition of constants.
\end{proposition}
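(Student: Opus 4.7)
The claim reduces to a direct computation, so the plan is to substitute $u+c$ in the expression of $I_\beta$ for an arbitrary constant $c\in\R$ and show that all the $c$-dependent contributions cancel. The quadratic part is immediate: since $\nabla_g c=0$ and $\Delta_g c=0$, the terms $\intI(\Delta_g u)^2$ and $\intI|\nabla_g u|^2$ are unchanged. The constant $\mathfrak f(\beta)$ does not depend on $u$ at all, so it plays no role in the cancellation.

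The nontrivial part lies in checking that the three $c$-linear contributions (coming from the zeroth order term $12\intI u$ and from the two logarithmic terms) sum to zero. Concretely, I would compute
\begin{equation*}
12\intI (u+c)=12\intI u+12c\cdot\mathrm{Vol}^4(\Sph^4_+)=12\intI u+16\pi^2 c,
\end{equation*}
and, using the elementary identities $\log(e^{4c}A)=4c+\log A$ and $\log(e^{3c}B)=3c+\log B$,
\begin{equation*}
-\beta\log\intI Qe^{4(u+c)}=-4\beta c-\beta\log\intI Qe^{4u},
\end{equation*}
\begin{equation*}
-\tfrac{4}{3}(4\pi^2-\beta)\log\intB Te^{3(u+c)}=-4(4\pi^2-\beta)c-\tfrac{4}{3}(4\pi^2-\beta)\log\intB Te^{3u}.
\end{equation*}

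Adding the $c$-linear pieces gives
\begin{equation*}
16\pi^2 c-4\beta c-4(4\pi^2-\beta)c=16\pi^2 c-16\pi^2 c=0,
\end{equation*}
which establishes $I_\beta(u+c)=I_\beta(u)$. Observe that the identity holds uniformly in $\beta\in[0,4\pi^2]$: when $\beta=0$ the log-interior term is absent and the cancellation reads $16\pi^2 c-16\pi^2 c=0$; when $\beta=4\pi^2$ the log-boundary term is absent and the same cancellation holds. Thus the statement follows for every $\beta\in[0,4\pi^2]$ and for every admissible $u$ in $\mathcal H$, $\mathcal H^0$ or $\mathcal H^{4\pi^2}$, which is consistent with the fact that the choice of coefficients in \eqref{functional} is precisely the one that makes the whole formulation invariant under translations by constants. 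There is no real obstacle here beyond keeping track of the correct coefficients and of the volume $\mathrm{Vol}^4(\Sph^4_+)=\tfrac{4\pi^2}{3}$.
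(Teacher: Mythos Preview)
Your proof is correct and follows essentially the same direct computation as the paper: both verify that the $c$-linear contributions $16\pi^2 c - 4\beta c - 4(4\pi^2-\beta)c$ cancel, using $\mathrm{Vol}^4(\Sph^4_+)=\tfrac{4\pi^2}{3}$. Your version is simply more explicit about the endpoint cases $\beta=0$ and $\beta=4\pi^2$.
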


\begin{proof}
By direct computation,
\begin{align*}
I(u+C,\beta) &= I(u,\beta)+12C\cdot\frac{8\pi^2}{6}-4\beta C - \frac43 (4\pi^2-\beta)3C \\ &=I(u,\beta)+16\pi^2C - 4\beta C - 16\pi^2 C+4\beta C = I(u,\beta). 
\end{align*}
\end{proof}
\section{Sharp higher-order Moser-Trudinger inequalities}\label{sec:MT}
This section is devoted to proving some higher order Moser-Trudinger type inequalities for functions in $H^2_\de(\Sph^4_+)$, from which we will show that the energy is uniformly bounded from below. However, these inequalities are sharp for the case of the half-sphere and it is not possible to attain coercivity. To deal with this issue, we will derive localized versions that will help us to refine the previous results when working with functions whose mass is distributed in several separate regions, as in the case of symmetric functions, in the spirit of \cite{aubin79} (see also \cite{chen-li91}).

\begin{proposition}\label{Prop:MT1} There exists a constant $C$ such that 
	\begin{equation}\label{MT1}
		\log \intI e^{4(u-\bar u_{\Sph^4_+})} \leq C + \frac{1}{4\pi^2}\scal{P^{4,3}u,u}_{L^2(\Sph^4_+)},
	\end{equation}
	for every $u\in H^2(\Sph^4_+)$ with $\frac{\de u}{\de \eta} =0$.
\end{proposition}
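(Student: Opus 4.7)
The plan is to reduce the claim to the corresponding sharp inequality on the full sphere $\Sph^4$, which is due to Beckner \cite{beckner}, by a reflection argument that exploits the homogeneous Neumann condition.

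First, given $u\in H^2(\Sph^4_+)$ with $\partial u/\partial \nu=0$ on $\Sph^3$, I would define its even reflection $\tilde u$ across the equator, i.e. $\tilde u(x) = u(x)$ for $x\in \Sph^4_+$ and $\tilde u(x) = u(\sigma(x))$ for $x$ in the lower hemisphere, where $\sigma$ is the reflection $(x_1,\dots,x_4,x_5)\mapsto (x_1,\dots,x_4,-x_5)$. The tangential first derivatives of $\tilde u$ match automatically across $\Sph^3$, and the matching of the normal derivative is exactly the content of the Neumann condition. This gives $\tilde u\in H^2(\Sph^4)$, and an elementary integration by parts (splitting $\Sph^4 = \Sph^4_+ \cup \Sph^4_-$ and cancelling the boundary terms thanks to $\partial u/\partial\nu = 0$) yields the two identities
\begin{equation*}
\int_{\Sph^4}(\Delta \tilde u)^2\,dV = 2\intI (\Delta u)^2, \qquad \int_{\Sph^4}|\nabla \tilde u|^2\,dV = 2\intI |\nabla u|^2,
\end{equation*}
so that $\langle P^{4}\tilde u,\tilde u\rangle_{L^2(\Sph^4)} = 2\,\langle P^{4,3}u,u\rangle_{L^2(\Sph^4_+)}$ in view of the explicit expression of $P^{4,3}$ on the hemisphere recalled earlier. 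Likewise, by symmetry,
\begin{equation*}
\int_{\Sph^4} e^{4\tilde u}\,dV = 2\intI e^{4u}, \qquad \bar{\tilde u}_{\Sph^4} = \bar u_{\Sph^4_+}.
\end{equation*}

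Second, I would apply the sharp fourth-order Moser--Trudinger--Beckner inequality on $\Sph^4$ to $\tilde u$, namely
\begin{equation*}
\log \int_{\Sph^4} e^{4(\tilde u - \bar{\tilde u}_{\Sph^4})}\,dV \leq C_0 + \frac{1}{8\pi^2}\langle P^4\tilde u,\tilde u\rangle_{L^2(\Sph^4)},
\end{equation*}
and then substitute the three identities above. The factor $2$ arising from the doubled energy combines with the Beckner constant $1/(8\pi^2)$ to produce the claimed $1/(4\pi^2)$, while the factor $2$ in the integral on the left contributes only a harmless $-\log 2$ that can be absorbed in $C$. This yields \eqref{MT1}.

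The only delicate step is verifying that the reflection $\tilde u$ really belongs to $H^2(\Sph^4)$ and that the identity $\langle P^4\tilde u,\tilde u\rangle = 2\langle P^{4,3}u,u\rangle$ holds without extra boundary contributions; both are consequences of $\partial u/\partial \nu = 0$, and since $\Sph^3\subset \Sph^4_+$ is totally geodesic the usual boundary terms arising in the integration by parts for $P^4$ vanish cleanly, so no further regularity is needed. I would also briefly remark that $1/(4\pi^2)$ is sharp: if it could be improved, the reflection argument would give an improvement of Beckner's constant on $\Sph^4$ on the subspace of even functions, contradicting that Beckner's constant is already sharp (it is attained by constants, which are even).
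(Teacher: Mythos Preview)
Your proposal is correct and follows essentially the same approach as the paper: define the even reflection across the equator (the paper calls it $u^\star$), verify it lies in $H^2(\Sph^4)$ thanks to the Neumann condition, apply Beckner's inequality on $\Sph^4$, and use the doubling identities for the energy, the exponential integral, and the mean value to recover \eqref{MT1} with the constant $1/(4\pi^2)$. Your additional remark on sharpness is not in the paper's proof but is correct and harmless.
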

\begin{proof} The analogue inequality to \eqref{MT1} in the closed sphere $\Sph^4$ is due to Beckner \cite{beckner} (see also \cite[Eq. (2.20)]{chang-qing1997-II}):
\begin{equation}\label{becknerInt} \log \int_{\Sph^4}e^{4(u-\bar u_{\Sph^4})} \leq C+\frac{1}{8\pi^2}\int_{\Sph^4}\((\Delta u)^2+2\abs{\nabla u}^2\).
\end{equation}
The proof follows the well-known strategy of extending any function $u\in H^2(\Sph^4_+)$ to $\Sph^4$ by symmetry, possible due to the condition $\frac{\de u}{\de \eta}=0$, applying the inequality \eqref{becknerInt} and returning to the upper hemisphere by a change of variables. For $u\in H^2(\Sph^4_+)$ with $\frac{\de u}{\de \eta}=0,$ we define its symmetrization to $\Sph^4$ as the function $u^\star:\Sph^4\to \R$ given by
\begin{equation*}
	u^\star(x)=\left\lbrace \begin{array}{cc}
u(x) & \text{if}\hsp x_5\geq0, \\
u(R(x)) & \text{if}\hsp x_5<0,
	\end{array} \right.
\end{equation*}
where $x=(x_1,x_2,x_3,x_4,x_5)$ and $R(x)=(x_1,x_2,x_3,x_4,-x_5)$. Integrating by parts and using the condition $\frac{\de u}{\de x_5}=0$, it can be shown that the functions $g,h:\Sph^4_+\to \R$ given by the formulae
\begin{equation*} 
	g(x)=\left\lbrace \begin{array}{cc}
		\frac{\de u(x)}{\de x_5} & \text{if}\hsp x_5\geq0, \\[1ex]
		-\frac{\de u(R(x))}{\de x_5} & \text{if}\hsp x_5<0,
	\end{array} \right. \quad \text{and} \quad h(x)=\left\lbrace \begin{array}{cc}
	\frac{\de^2 u(x)}{\de x_5^2} & \text{if}\hsp x_5\geq0, \\[1ex]
	\frac{\de^2 u(R(x))}{\de x_5^2} & \text{if}\hsp x_5<0,
	\end{array} \right.
\end{equation*}
are the first and second weak derivatives of $u^\star$ with respect to $x_5$, respectively, and 
\begin{equation*}%\label{eq7-1}
\norm{u^\star}^2_{H^2(\Sph^4)}=2\norm{u}_{H^2(\Sph^4_+)}^2.
\end{equation*}
Therefore, we can apply \eqref{becknerInt} to $u^\star$, obtaining
\begin{align}\label{eq*}
\log \int_{\Sph^4}e^{4(u^\star-\overline{u^\star}_{\Sph^4})}\leq C+\frac{1}{8\pi^2}\int_{\Sph^4}\left((\Delta_g u^\star)^2+2\abs{\nabla_g u^\star}^2\right).
\end{align}
By means of the change of variables $y=R(x)$, we immediately see that $\overline{u^\star}_{\Sph^4} = \overline{u}_{\Sph^4_+}$, which implies
\begin{equation}\label{eq7-2}
\log\int_{\Sph^4}e^{4(u^\star-\overline{u^\star}_{\Sph^4})} = \log \int_{\Sph^4_+}e^{4(u-\bar u_{\Sph^4_+})}+\log 2.
\end{equation}
Furthermore,
\begin{equation}\label{eq7-3}
\int_{\Sph^4}\left((\Delta_g u^\star)^2+2\abs{\nabla_g u^\star}^2\right) = 2\int_{\Sph^4_+}\left((\Delta_g u)^2+2\abs{\nabla_g u}^2\right).
\end{equation}
Equation \eqref{MT1} follows from using \eqref{eq*} and \eqref{eq7-3} in \eqref{eq7-2}.
\end{proof}
Next, we present a boundary version of the sharp inequality \eqref{MT1}. It is a particular case of \cite[Theorem 6.14]{Case2} when $u$ satisfies $\frac{\de u}{\de \eta_g}=0$.
\begin{proposition} There exists a constant $C$ such that
	\begin{equation}\label{MT2}
		\log \intB e^{3(u-\bar u_{\Sph^3})}\leq C+\frac{3}{16\pi^2}\scal{P^{4,3}{u},u}_{L^2(\Sph^4_+)},
	\end{equation}
	for every $u\in H^2_\de(\Sph^4_+)$.
\end{proposition}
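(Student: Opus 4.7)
The plan is to follow the reflection strategy already employed in the proof of Proposition~\ref{Prop:MT1}: extend $u$ to a function on the full sphere $\Sph^4$ by even reflection across the equator, apply a sharp trace-type Moser--Trudinger inequality on $\Sph^4$ with respect to its equatorial $\Sph^3$, and transfer the result back to $\Sph^4_+$.

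Given $u\in H^2_\de(\Sph^4_+)$, I would first define $u^\star:\Sph^4\to\R$ by setting $u^\star(x)=u(x)$ for $x_5\geq 0$ and $u^\star(x)=u(R(x))$ for $x_5<0$, where $R$ denotes the reflection $(x_1,\ldots,x_4,x_5)\mapsto(x_1,\ldots,x_4,-x_5)$. Exactly as in the proof of Proposition~\ref{Prop:MT1}, the Neumann condition $\de u/\de\eta_g=0$ is what makes $u^\star$ belong to $H^2(\Sph^4)$, and an integration by parts gives
\begin{equation*}
\int_{\Sph^4}\bigl((\Delta_g u^\star)^2+2\abs{\nabla_g u^\star}^2\bigr)=2\scal{P^{4,3}u,u}_{L^2(\Sph^4_+)}.
\end{equation*}
Since $u^\star$ agrees with $u$ on the common $\Sph^3$, both the boundary integral $\int_{\Sph^3}e^{3u^\star}$ and the boundary mean $\overline{u^\star}_{\Sph^3}$ coincide with their counterparts for $u$.

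Next, the key ingredient is a sharp trace-type inequality on the closed sphere $\Sph^4$: the existence of $C>0$ such that for every $v\in H^2(\Sph^4)$ one has
\begin{equation*}
\log\int_{\Sph^3}e^{3(v-\overline{v}_{\Sph^3})}\leq C+\frac{3}{32\pi^2}\int_{\Sph^4}\bigl((\Delta_g v)^2+2\abs{\nabla_g v}^2\bigr).
\end{equation*}
This is precisely the specialization of \cite[Theorem 6.14]{Case2} to the round four-sphere. Plugging in $v=u^\star$ and combining with the identities of the previous paragraph yields the bound stated, with constant $\tfrac{3}{32\pi^2}\cdot 2=\tfrac{3}{16\pi^2}$.

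The hard part is clearly the intermediate sharp inequality on $\Sph^4$: once it is granted, the symmetrization and the return to the hemisphere are essentially verbatim from Proposition~\ref{Prop:MT1}. I would either invoke Case's theorem as the authors do, or, aiming for a self-contained derivation, transport the problem to $\R^4_+$ via stereographic projection, exploit the conformal covariance of the operators $P^4$ and $P^3$, and transplant a sharp Lebedev--Milin-type trace inequality for the bi-Laplacian on the half-space. Tracking the optimal constant $\tfrac{3}{32\pi^2}$ through the conformal transfer is the most delicate point of such a self-contained approach.
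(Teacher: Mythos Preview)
The paper gives no independent argument here: it simply records that the inequality is the special case of \cite[Theorem~6.14]{Case2} obtained by restricting to functions with $\partial u/\partial\nu_g=0$. Case's result is formulated for four-manifolds \emph{with boundary}, so it applies directly to $(\Sph^4_+,\Sph^3)$ and already delivers the sharp constant $3/(16\pi^2)$; no reflection is involved.

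Your reflection detour is therefore unnecessary, and there is a misattribution in how you invoke Case. The intermediate inequality you write down --- a trace inequality on the \emph{closed} sphere $\Sph^4$ for arbitrary $v\in H^2(\Sph^4)$, with constant $3/(32\pi^2)$ --- is not what \cite[Theorem~6.14]{Case2} provides, since that theorem concerns boundary operators on manifolds with boundary, not traces to an embedded hypersurface of a closed manifold. For the particular function $v=u^\star$, which is even across the equator, your full-sphere statement is tautologically equivalent to the hemisphere statement (the energy halves, the boundary integral is unchanged), so the reflection step buys nothing. And if you wanted your full-sphere inequality for \emph{general} $v$, you would have to justify it separately; it does not come from the cited reference. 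Either way the route through $\Sph^4$ is circular: the hard input is exactly Case's hemisphere inequality, which the paper just quotes.
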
 

In order to interpolate these inequalities, we give a version of \eqref{MT2} with the mean value of $u$ in the interior of $\Sph^4_+$. 
\begin{proposition}\label{prop:MT2-bis}There exists a constant $C$ such that
	\begin{equation}
 \label{MT2-bis}\log \intB e^{3(u-\bar u_{\Sph^4_+})}\leq C+\frac{3}{16\pi^2}\scal{P^{4,3}u,u}_{L^2(\Sph^4_+)},
	\end{equation}
	for every $u\in H^2_\de(\Sph^4_+)$.
\end{proposition}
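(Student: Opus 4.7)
The plan is to follow the reflection strategy from the proof of Proposition~\ref{Prop:MT1}, reducing the hemisphere inequality \eqref{MT2-bis} to a sharp restriction-type inequality on the full sphere $\Sph^4$. Given $u\in H^2_\de(\Sph^4_+)$, I would extend it evenly across the equator to a function $u^\star$ on $\Sph^4$, as in the proof of \eqref{MT1}. The Neumann condition $\frac{\de u}{\de \nu}=0$ is precisely what guarantees $u^\star\in H^2(\Sph^4)$ with a doubling of the Dirichlet energy,
\begin{equation*}
\int_{\Sph^4}\left((\Delta u^\star)^2 + 2\abs{\nabla u^\star}^2\right) = 2\,\scal{P^{4,3}u,u}_{L^2(\Sph^4_+)}.
\end{equation*}
A change of variables also yields $u^\star|_{\Sph^3}=u|_{\Sph^3}$ and $\overline{u^\star}_{\Sph^4}=\overline{u}_{\Sph^4_+}$, so that
\begin{equation*}
\log\intB e^{3(u-\overline{u}_{\Sph^4_+})} = \log\int_{\Sph^3} e^{3(u^\star-\overline{u^\star}_{\Sph^4})}.
\end{equation*}

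The next step is to apply to $u^\star$ a sharp Beckner-type restriction inequality on the closed sphere $\Sph^4$ integrated over the equator $\Sph^3$: for some $C>0$ and every $v\in H^2(\Sph^4)$,
\begin{equation*}
\log \int_{\Sph^3} e^{3(v-\overline{v}_{\Sph^4})} \leq C + \frac{3}{32\pi^2} \int_{\Sph^4}\left((\Delta v)^2 + 2\abs{\nabla v}^2\right).
\end{equation*}
Combined with the doubling identity above, this would produce \eqref{MT2-bis} with the claimed sharp constant $\frac{3}{16\pi^2}$, exactly as the doubling argument turned the closed Beckner inequality \eqref{becknerInt} into \eqref{MT1}.

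The hard part will be establishing the restriction inequality on $\Sph^4$ with the sharp constant $\frac{3}{32\pi^2}$. I would approach this either by adapting the techniques of \cite{Case2} that underlie \eqref{MT2} to the doubled setting, or by a direct spectral argument: expand $v$ in spherical harmonics on $\Sph^4$, diagonalize the Paneitz operator mode by mode, and match against the trace on the equator. A strictly weaker alternative is to apply \eqref{MT2} directly to $u$ and bound the discrepancy $3(\overline{u}_{\Sph^3}-\overline{u}_{\Sph^4_+})$ via a Green's identity against a Neumann potential (solving $\Delta v = 1/\text{Vol}^4(\Sph^4_+)$, $\frac{\de v}{\de \nu_g} = 1/\text{Vol}^3(\Sph^3)$), followed by the trace theorem and Young's inequality; that route produces an inequality of the same form but with $\frac{3}{16\pi^2}+\e$ in place of $\frac{3}{16\pi^2}$, which is precisely the $\e$-loss that the authors emphasize in the introduction must be avoided for the subsequent variational analysis.
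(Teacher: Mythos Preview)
Your main route (even reflection plus a restriction inequality on the full sphere) is not the paper's, and it does not really reduce the problem: once you decompose a general $v\in H^2(\Sph^4)$ into its even and odd parts with respect to the equatorial reflection, the odd part has zero trace and zero mean, so the restriction inequality you need is equivalent to its validity for even functions, which by the doubling identity is precisely \eqref{MT2-bis} again. In other words, the ``hard part'' you flag is circular unless you are prepared to reprove a sharp trace inequality from scratch by spectral methods, which is substantially more work than what the paper actually does.

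The paper's argument is in fact a sharpened version of your discarded ``weaker alternative'': it uses \eqref{MT2} as the only analytic input, but instead of applying it to $u$ and then estimating the discrepancy $\bar u_{\Sph^3}-\bar u_{\Sph^4_+}$ with Young's inequality (which costs an $\e$), it applies \eqref{MT2} to $u+w$, where $w\in H^2_\de(\Sph^4_+)$ solves the fourth-order Neumann problem
\[
\Delta_g^2 w-2\Delta_g w=6\ \text{in }\Sph^4_+,\qquad -\tfrac{\de}{\de\nu_g}\Delta_g w=-4,\ \ \tfrac{\de w}{\de\nu_g}=0\ \text{on }\Sph^3.
\]
The point is that the cross term $2\scal{P^{4,3}u,w}$, after integration by parts and insertion of the equations for $w$, collapses exactly to $6\intI u-4\intB u$, which converts the boundary mean coming from \eqref{MT2} into the interior mean $\bar u_{\Sph^4_+}$ with no loss; the $w$-dependent terms are absorbed into the constant. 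Existence of $w$ is a one-line coercivity argument for the associated quadratic functional. So the insight you were missing is not a new inequality but the substitution $u\mapsto u+w$: it turns your $\e$-losing Green's-identity estimate into an exact identity.
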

\begin{proof}

\medskip

First, we show that the problem
\begin{equation}\label{aux-prob}
\left\lbrace\begin{array}{ll}
\Delta_g^2w-2\Delta_gw = 6 & \text{in } \Sph^4_+, \\[0.5ex]
-\frac{\de}{\de \nu_g} \Delta_g w =  -4 & \text{on } \Sph^3, \\[0.5ex]
\frac{\de w}{\de \nu_g} = 0 & \text{on } \Sph^3.
\end{array} \right.
\end{equation}
admits a solution. This can be done by means of the direct method of the Calculus of Variations, noticing that \eqref{aux-prob} is the Euler-Lagrange equation associated to the functional
\begin{equation*}
J(w)=\frac12 \scal{P^{4,3}w,w}_{L^2(\Sph^4_+)}-6\intI w+4\intB w,
\end{equation*}
defined for every $w\in H^2_\de(\Sph^4_+)$. By Sobolev and trace inequalities, there exists a positive constant $C$ such that
\begin{equation}\label{wololo1}
-6\intI w+4\intB w \geq -C \norm{w}_{H^2(\Sph^4_+)}.
\end{equation}
Finally, by \eqref{wololo1} and Lemma \ref{norm-equiv}, we get 

$$J(w)\geq \tilde C\norm{w}^2_{H^2(\Sph^4_+)}-C\norm{w}_{H^2(\Sph^4_+)},$$
 where $\tilde C$ is also a positive constant. Consequently, $J$ is coercive in its space of definition, which leads towards the existence of a global minimizer following standard methods.

\medskip

Once this has been established, we fix a solution $w$ of \eqref{aux-prob} and apply \eqref{MT2} to $u+w$, obtaining
\begin{equation*}
\begin{split}
\log \intB e^{3u} &\leq C +\frac{3}{16\pi^2}\intI\left((\Delta_gu)^2+2\abs{\nabla_gu}^2\right)\\ &+ \frac{3}{8\pi^2} \intI \left(\Delta_gu \Delta_g w+2\nabla_g u \nabla_g w\right)+\frac{3}{2\pi^2}\intB u.
\end{split}
\end{equation*}
Integrating by parts and using \eqref{aux-prob}, we obtain
\begin{equation*}
\begin{split}
\log \intB e^{3u} &\leq C + \frac{3}{16\pi^2}\intI\left((\Delta_gu)^2+2\abs{\nabla_gu}^2\right)+\frac{3}{8\pi^2}\intI u\left(\Delta_g^2 w-2\Delta_g w\right) \\ &+\intB u \left(-\frac{3}{8\pi^2}\frac{\de }{\de \nu_g}\Delta_g w+\frac{3}{4\pi^2}\frac{\de w}{\de \nu_g}+\frac{3}{2\pi^2}\right) \\ &= C + \frac{3}{16\pi^2}\intI\left((\Delta_gu)^2+2\abs{\nabla_gu}^2\right) + \frac{9}{4\pi^2}\intI u.
\end{split}
\end{equation*}
To conclude, it suffices to notice that $\text{Vol}^4(\Sph^4_+)=\frac{4\pi^2}{3}$.
\end{proof}
Now we present localized versions of \eqref{MT1} and \eqref{MT2-bis} that will be of particular usefulness when we can guarantee that the mass of the function $e^u$ is concentrated in the interior of $\Sph^4_+$ or distributed in separated regions along its boundary.

\begin{proposition}\label{local-int1} Let $\Omega\subset \Sph^4_+$ be such that $\dist(\Omega,\Sph^3)>\delta$ for some $\delta>0$. Then, for every $\e>0$, there exist a constant $0<C=C(\Omega,\e,\delta)$ and a large enough eigenvalue $\lambda=\lambda(\e,\delta)$ of $P^{4,3}$, such that 
	
	\begin{equation}\label{local-ineq-1}
		\begin{split}
			\log \int_{\Omega} e^{4u} \leq C + \frac{1}{8\pi^2}\int_{\Omega^\delta}\((\Delta u^\dagger)^2+2\abs{\nabla u^\dagger}^2\)+\e\scal{P^{4,3}u,u}_{L^2(\Sph^4_+)}
		\end{split}
	\end{equation}
	for every $u\in H^2(\Sph^4_+)$ with $\frac{\de u}{\de\eta_g}=0,$ where $u^\dagger$ denotes the projection of $u$ to $E_\lambda^\perp$, being $E_\lambda$ the direct sum of the eigenspaces of $P^{4,3}$ with eigenvalues smaller or equal than $\lambda.$
\end{proposition}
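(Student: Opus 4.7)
The plan is to combine three ingredients: a spectral truncation of $u$ that isolates a finite-dimensional low-frequency piece, a smooth cutoff moving the problem inside $\Sph^4_+$ away from the boundary, and the sharp Beckner inequality \eqref{becknerInt} on the closed sphere. Because the cutoff produces a function compactly supported in the interior of $\Sph^4_+$, I would extend it by zero across the equator and apply Beckner directly, without any reflection; this is precisely what replaces the constant $\tfrac{1}{4\pi^2}$ appearing in Proposition \ref{Prop:MT1} by the sharper $\tfrac{1}{8\pi^2}$ we want. Since both sides of \eqref{local-ineq-1} behave equivariantly once one normalizes the $\log$-integral on the left, I would work throughout with $\overline{u}_{\Sph^4_+}=0$, which removes the constant mode from $u_\flat$.

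Concretely, fix $\lambda>0$ large to be chosen at the end, and split $u=u_\flat+u^\dagger$ with $u_\flat\in E_\lambda$ and $u^\dagger\in E_\lambda^\perp$. Since $E_\lambda$ is finite-dimensional and its lowest positive eigenvalue is bounded below, every norm on $E_\lambda\cap\{\bar v=0\}$ is equivalent to $\scal{P^{4,3}\cdot,\cdot}^{1/2}$; hence $\|u_\flat\|_{L^\infty}\leq C_\lambda\,\scal{P^{4,3}u,u}_{L^2(\Sph^4_+)}^{1/2}$, and Young's inequality absorbs $4\|u_\flat\|_{L^\infty}$ into $\e\,\scal{P^{4,3}u,u}$ plus a constant. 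It therefore suffices to bound $\log\int_\Omega e^{4u^\dagger}$. For this I would choose a smooth cutoff $\chi$ on $\Sph^4_+$ with $\chi\equiv 1$ on $\Omega$, $\supp\chi\subset\Omega^\delta$, and $\chi\equiv 0$ in a neighborhood of $\Sph^3$, which is possible because $\dist(\Omega,\Sph^3)>\delta$. Extending $\chi u^\dagger$ by zero to the lower hemisphere yields an $H^2(\Sph^4)$ function, and \eqref{becknerInt} applied to it gives
\[
\log\int_\Omega e^{4u^\dagger}\leq \log\int_{\Sph^4}e^{4\chi u^\dagger}\leq C+4\,\overline{\chi u^\dagger}_{\Sph^4}+\frac{1}{8\pi^2}\int_{\Sph^4}\bigl((\Delta(\chi u^\dagger))^2+2|\nabla(\chi u^\dagger)|^2\bigr).
\]

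Expanding $\Delta(\chi u^\dagger)=\chi\Delta u^\dagger+2\nabla\chi\cdot\nabla u^\dagger+u^\dagger\Delta\chi$ (and analogously for the gradient), I would isolate the main contribution $\int_{\Omega^\delta}\chi^2\bigl((\Delta u^\dagger)^2+2|\nabla u^\dagger|^2\bigr)$, which is bounded by the first integral in \eqref{local-ineq-1} because $\chi^2\leq 1$. The remaining cross terms are controlled via Cauchy--Schwarz by $C(\chi)\bigl(\|u^\dagger\|_{L^2}^2+\|\nabla u^\dagger\|_{L^2}^2\bigr)$; expanding $u^\dagger$ in the common eigenbasis of $P^{4,3}$ and $-\Delta_g$ and using the spectral gap yields $\|u^\dagger\|_{L^2}^2\leq \lambda^{-1}\scal{P^{4,3}u,u}$ and $\|\nabla u^\dagger\|_{L^2}^2\leq C\lambda^{-1/2}\scal{P^{4,3}u,u}$, while the same estimate dominates $|\overline{\chi u^\dagger}_{\Sph^4}|$. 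Choosing $\lambda=\lambda(\e,\delta)$ sufficiently large, every error is absorbed into $\e\,\scal{P^{4,3}u,u}$, producing \eqref{local-ineq-1}. The main technical point is the book-keeping of these cutoff cross terms: each has to be placed either in the geometric main piece or in a term that decays in $\lambda$, and this is what forces the threshold $\lambda$ to depend on $\delta$ through $\|\chi\|_{C^2}$ as well as on $\e$.
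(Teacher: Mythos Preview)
Your plan is correct and matches the paper's own proof almost step for step: spectral split $u=u_\flat+u^\dagger$, $L^\infty$ control of the finite-dimensional piece, zero-extension of the cut-off high-frequency part to $\Sph^4$, application of the full-sphere inequality with constant $\tfrac{1}{8\pi^2}$, and absorption of the Leibniz cross terms via the spectral gap. The only cosmetic difference is that the paper bounds the cutoff gradient error by interpolating $\int|\nabla u^\dagger|^2\le \e\int(\Delta u^\dagger)^2+C_\e\int (u^\dagger)^2$ and then uses $\|u^\dagger\|_{L^2}^2\le\lambda^{-1}\scal{P^{4,3}u,u}$, whereas you invoke the common eigenbasis of $-\Delta_g$ and $P^{4,3}$ to get $\|\nabla u^\dagger\|_{L^2}^2\le C\lambda^{-1/2}\scal{P^{4,3}u,u}$ directly; both routes give the same conclusion.
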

\begin{proof} The proof follows some of the ideas introduced in \cite[Lemma 2.2]{djadli-malchiodi-Q1} and their adaptation to the boundary case in \cite[Lemma 4.1]{ndiaye2008}. In our case, when $\mathcal F_\de \neq \emptyset$, it is not convenient to remove the weights $Q$ and $T$ from the integral terms, and the direct application of the aforementioned results is not enough to conclude. Therefore, we must introduce substantial changes in order to get localized versions.
	
\smallskip 

First, we consider a smooth cut-off function $\chi$ satisfying
\begin{equation*}
\left\lbrace \begin{array}{ll}
\chi(x) \in [0,1] & \text{for every} \hsp x\in\Sph^4_+,\\
\chi(x) = 1 & \text{if} \hsp x\in \Omega, \\
\chi(x) = 0 & \text{if} \hsp \dist\(x,\Omega\)\geq \frac\delta 2, \\
\norm{\chi}_{C^2(\Sph^4_+)} \leq 1.
\end{array}\right.
\end{equation*}

Let $u\in H^2(\Sph^4_+)$, and assume without loss of generality that $\overline{u}{_{\Sph^4_+}}=0$. We call $v$ the extension by zero of $\chi u$ to $\Sph^4$, which belongs to $H^2(\Sph^4)$ due to the properties of $\Omega$. We write $u=u_1+u_2$, with $u_1\in L^\infty(\Sph^4)$, and define $v_1=\chi u_1$ and $v_2=\chi u_2$. 

\smallskip 

We can estimate as follows:

\begin{equation*}%\label{mt-local-5}
\begin{split}
\log \int_{\Omega} e^{4u} &\leq \log \int_{\Sph^4_+} e^{4\chi u} \leq \log \int_{\Sph^4} e^{4v} \leq  \log\left( e^{4\norm{v_1}_{L^\infty(\Sph^4)}}\int_{\Sph^4}e^{4v_2}\right)\\ &= 4\norm{u_1}_{L^\infty(\Sph^4_+)} + \log \int_{\Sph^4}e^{4v_2}.
\end{split}
\end{equation*}

Next, we apply \cite[Lemma 2.1]{djadli-malchiodi-Q1} to $v_2$, obtaining:
\begin{equation}\label{mt-local3}
	\begin{split}
\log\int_{\Omega} e^{4u} \leq C + 4\norm{u_1}_{L^\infty(\Sph^4)} + \frac{3}{\pi^2} \int_{\Sph^4_+} v_2 + \frac{1}{8\pi^2}\scal{P^4v_2,v_2}_{L^2(\Sph^4)},
\end{split}
\end{equation}
where $P^4=\Delta^2-2\Delta$ denotes the Paneitz operator on the closed sphere $\Sph^4$. Integrating by parts twice and using the fact that the derivatives of $v_2$ are compactly supported on $\Sph^4_+$
, we get
\begin{equation}\label{mt-local2}
	\begin{split}
	\scal{P^4v_2,v_2}_{L^2(\Sph^4)} &= \int_{\Sph^4} \(\Delta^2v_2\,v_2-2\Delta v_2\,v_2\) = \int_{\Sph^4} \((\Delta v_2)^2+2\abs{\nabla v_2}^2\) \\
	&=\int_{\Sph^4_+} \((\Delta v_2)^2+2\abs{\nabla v_2}^2\) = \scal{P^{4,3}v_2,v_2}_{L^2(\Sph^4_+)}.
	\end{split}
\end{equation}

By Leibniz's rule and Schwartz's inequality, for every $\e>0$ there exists a constant $C$ depending on $\e, \delta$ and $\Omega$ such that

\begin{equation}\label{mt-local1}
\begin{split}
\scal{P^{4,3}v_2,v_2}_{L^2(\Sph^4_+)}  \leq \int_{\Sph^4_+} \chi^2 \((\Delta u_2)^2+2\abs{\nabla u_2}^2\) + \e \scal{P^{4,3}u_2,u_2}_{L^2(\Sph^4_+)} + C \intI {u_2}^2.
\end{split}
\end{equation}
Combining \eqref{mt-local3} with \eqref{mt-local2} and \eqref{mt-local1}, up to relabelling $\e$, we obtain
\begin{equation}\label{mt-local4}
\begin{split}
\log\int_{\Omega} e^{4u} &\leq \frac{1}{8\pi^2}\int_{\Sph^4_+} \chi^2 \((\Delta u_2)^2+2\abs{\nabla u_2}^2\) + \e  \scal{P^{4,3}u_2,u_2}_{L^2(\Sph^4_+)} \\ &+ C \intI {u_2}^2 +4\norm{u_1}_{L^\infty(\Sph^4)} + \frac{3}{\pi^2} \int_{\Sph^4_+} \chi u_2 + C.
\end{split}
\end{equation}
Now, choose $\lambda>0$ to be a large enough eigenvalue of $P^{4,3}$ in $H^2(\Sph^4_+)$. Let $E_\lambda$ be the direct sum of the eigenspaces of $P^{4,3}$ with eigenvalues less or equal than $\lambda$, and denote by $\Pi$ and $\Pi^\perp$ the projections onto $E_\lambda$ and $E_\lambda^\perp$, respectively. We set

\begin{equation*}
	u_1 = \Pi(u) \quad\text{and}\quad u_2 = \Pi^\perp(u).
\end{equation*}

Since $E_\lambda$ is finite-dimensional and $\overline{u}{_{\Sph^4_+}}=0,$ the $L^\infty-$norm and the $L^2-$norm are equivalent, with a proportional factor that only depends on $\e$, $\delta$ and $\Omega$ (see \cite{djadli-malchiodi-Q1}). Moreover, by the equivalence between the norms given in Lemma \ref{norm-equiv} and the orthogonality between $u_1$ and $u_2$, it holds
\begin{equation}\label{mt-local6}
\begin{split}
\norm{u_1}_{L^\infty(\Sph^4_+)}&\leq C_1 \scal{P^{4,3}u_1,u_1}^{\frac{1}{2}}_{L^2(\Sph^4_+)}, \\ C\norm{u_2}_{L^2(\Sph^4_+)}^{2} &\leq \frac{C}{\lambda} \scal{P^{4,3}u_2,u_2}_{L^2(\Sph^4_+)} \leq \e \scal{P^{4,3}u_2,u_2}_{L^2(\Sph^4_+)}, \\
\int_{\Sph^4_+} u_2 &\leq C_2 \norm{u_2}_{L^2(\Sph^4_+)}\leq C_2 \norm{u}_{L^2(\Sph^4_+)}\leq C_3 \scal{P^{4,3}u,u}^{\frac{1}{2}}_{L^2(\Sph^4_+)},
\end{split}
\end{equation}
where $C,C_1,C_2$ and $C_3$ are positive constants and $\lambda$ has been taken in such a way that ${C\over \lambda}<\e$. Notice that $\lambda$ depends on $\e$ and $\delta$, but not $\Omega$. Finally, inserting \eqref{mt-local6} into \eqref{mt-local4}, and using Cauchy's inequality, our choices of $u_1$ and $u_2$ leads to
\begin{equation*}
	\begin{split}
		\log\int_{\Omega} e^{4u} &\leq \frac{1}{8\pi^2}\int_{\Sph^4_+} \chi^2 \((\Delta u_2)^2+2\abs{\nabla u_2}^2\) + \e\scal{P^{4,3}u_2,u_2}_{L^2(\Sph^4_+)} \\ &+ C_1 \scal{P^{4,3}u_1,u_1}^{\frac{1}{2}}_{L^2(\Sph^4_+)} + C_3 \scal{P^{4,3}u,u}^{\frac{1}{2}}_{L^2(\Sph^4_+)}+ C
		\\ &\leq\frac{1}{8\pi^2}\int_{\Omega^{\delta/ 2}} \((\Delta u_2)^2+2\abs{\nabla u_2}^2\)+\e\scal{P^{4,3}u,u}_{L^2(\Sph^4_+)} + C,
	\end{split}
\end{equation*}
which is what we wanted to prove.
\end{proof}
Similarly, one can prove the following local inequality.
\begin{proposition}\label{local-int2} Let $\Sigma\subset \Sph^4_+$ be such that $\Sigma\cap\de \Sph^4_+\neq \emptyset$ and $\Sigma\subset (\de \Sph^4_+)^\delta$ for some $\delta>0$. Then, for every $\e>0$, there exist a constant $0<C=C(\Omega,\e,\delta)$ and a large enough eigenvalue $\lambda=\lambda(\e,\delta)$ of $P^{4,3}$, such that
	
	\begin{equation}\label{local-ineq-2}
		\begin{split}
			\log \int_{\Sigma} e^{4u} \leq C + \frac{1}{4\pi^2}\int_{\Sigma^\delta}\((\Delta u^\dagger)^2+2\abs{\nabla u^\dagger}^2\)+\e\scal{P^{4,3}u,u}_{L^2(\Sph^4_+)}
		\end{split}
	\end{equation}
	for every $u\in H^2(\Sph^4_+)$ with $\frac{\de u}{\de\eta_g}=0,$ where $u^\dagger$ denotes the projection of $u$ to $E_\lambda^\perp$, being $E_\lambda$ the direct sum of the eigenspaces of $P^{4,3}$ with eigenvalues smaller or equal than $\lambda.$
\end{proposition}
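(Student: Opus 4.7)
The plan is to follow the same pattern used for the proof of Proposition \ref{local-int1}, replacing the closed-sphere Beckner-type inequality \cite[Lemma 2.1]{djadli-malchiodi-Q1} by the sharp half-sphere inequality \eqref{MT1}. Since \eqref{MT1} carries the coefficient $\frac{1}{4\pi^2}$ (rather than $\frac{1}{8\pi^2}$), this automatically produces the prefactor of the localized term in \eqref{local-ineq-2}. The complication, with respect to Proposition \ref{local-int1}, is that $\Sigma$ is allowed to touch $\Sph^3$, so a cut-off supported in $\Sigma^{\delta/2}$ no longer vanishes near the boundary; thus one has to make sure that the product of $u$ with the cut-off still satisfies the homogeneous Neumann condition, which is exactly what is needed to invoke \eqref{MT1}.

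Accordingly, the first step is to construct a smooth cut-off $\chi$ with $\chi\equiv 1$ on $\Sigma$, $\supp\chi\subset\Sigma^{\delta/2}$, $\norm{\chi}_{C^2(\Sph^4_+)}\leq 1$, and, crucially, $\frac{\de\chi}{\de\nu_g}=0$ on $\Sph^3$. Such a cut-off can be built in Fermi coordinates around $\Sph^3$ by taking $\chi$ to be even in the normal variable within a small collar of the boundary. Since $\frac{\de u}{\de\nu_g}=0$, the product $\chi u$ then lies in $H^2_\de(\Sph^4_+)$. Assuming without loss of generality $\bar u_{\Sph^4_+}=0$, I would fix a large eigenvalue $\lambda$ of $P^{4,3}$ and decompose $u=u_1+u_2$ with $u_1=\Pi u\in E_\lambda$ and $u_2=u^\dagger=\Pi^\perp u$. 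Then
\begin{equation*}
\log\int_\Sigma e^{4u}\leq 4\norm{\chi u_1}_{L^\infty(\Sph^4_+)}+\log\int_{\Sph^4_+}e^{4\chi u_2},
\end{equation*}
and I would apply \eqref{MT1} to $\chi u_2$, controlling the resulting cross-terms via Leibniz's rule and Schwartz's inequality in the same fashion as in the passage from \eqref{mt-local3} to \eqref{mt-local4}, so as to obtain
\begin{equation*}
\scal{P^{4,3}(\chi u_2),\chi u_2}_{L^2(\Sph^4_+)}\leq \int_{\Sigma^{\delta}}\chi^2\((\Delta u_2)^2+2\abs{\nabla u_2}^2\)+\e\scal{P^{4,3}u_2,u_2}_{L^2(\Sph^4_+)}+C\intI u_2^2.
\end{equation*}
The equivalence of norms on the finite-dimensional space $E_\lambda$, combined with a choice of $\lambda$ large enough that $C/\lambda<\e$, then absorbs $\norm{u_2}_{L^2}^2$, $\norm{\chi u_1}_{L^\infty}$, and the mean-value contribution into the additive constant plus a harmless $\e\scal{P^{4,3}u,u}$ term, yielding \eqref{local-ineq-2}.

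The main obstacle I anticipate is the construction of the cut-off $\chi$ compatible with the Neumann boundary condition: in contrast to the interior setting of Proposition \ref{local-int1}, one cannot simply multiply by a bump and extend by zero to all of $\Sph^4$, as that extension would fail to lie in $H^2_\de(\Sph^4_+)$ and we would forfeit the sharp coefficient of \eqref{MT1}. The even reflection of $\chi$ across $\Sph^3$ (equivalent to choosing $\chi$ even in the normal variable in Fermi coordinates) is precisely tailored to keep $\chi u$ in the space where Proposition \ref{Prop:MT1} applies, and is the ingredient that unlocks the factor $\frac{1}{4\pi^2}$ appearing in \eqref{local-ineq-2}.
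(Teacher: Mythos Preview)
Your proposal is correct and follows essentially the same approach as the paper: the paper indicates that Proposition~\ref{local-int2} is proved analogously to Proposition~\ref{local-int1}, except that since $\Sigma$ touches the boundary one must invoke either a symmetrization argument (as in the proof of Proposition~\ref{Prop:MT1}) or the half-sphere inequality \eqref{MT1} directly, which is exactly what you do. Your observation that the cut-off must satisfy $\frac{\de\chi}{\de\nu_g}=0$ so that $\chi u\in H^2_\de(\Sph^4_+)$ is the key technical point the paper leaves implicit, and it correctly explains why the coefficient $\frac{1}{4\pi^2}$ (rather than $\frac{1}{8\pi^2}$) appears.
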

Observe that the constant in \eqref{local-ineq-2} is doubled with respect to \eqref{local-ineq-1}. This is due to the fact that, in the proof of Proposition \ref{local-int1}, we can use the extension by zero of suitable truncations, staying under the hypotheses of \cite[Lemma 2.1]{djadli-malchiodi-Q1} for $\Sph^4$, and then go back to $\Sph^4_+$ keeping the best constant. In Proposition \ref{local-int2}, however, the set $\Sigma$ touches the boundary, and we need to make use of a symmetrization argument (see the proof of Proposition \ref{Prop:MT1}), or the inequality \eqref{MT1}, obtaining a factor two.

\medskip 

The following result is the localized version for Proposition \ref{prop:MT2-bis}, which follows from an easy adaptation of the arguments from Proposition \ref{local-int1} (see \cite{ndiaye2009}).

\begin{proposition}\label{local-int3} Let $S\subset \Sph^3$ and $\delta>0$. Then, for every $\e>0$, there exist a constant $0<C=C(\Omega,\e,\delta)$ and a large enough eigenvalue $\lambda=\lambda(\e,\delta)$ of $P^{4,3}$ in $\Sph^4_+$, such that
	
	\begin{equation*}%\label{local-ineq-3}
		\begin{split}
			\log \int_{S} e^{3u} \leq C + \frac{3}{16\pi^2}\int_{S^\delta}\((\Delta u^\dagger)^2+2\abs{\nabla u^\dagger}^2\)+\e\scal{P^{4,3}u,u}_{L^2(\Sph^4_+)}
		\end{split}
	\end{equation*}
	for every $u\in H^2(\Sph^4_+)$ with $\frac{\de u}{\de\eta_g}=0,$ where $u^\dagger$ denotes the projection of $u$ to $E_\lambda^\perp$, being $E_\lambda$ the direct sum of the eigenspaces of $P^{4,3}$ with eigenvalues smaller or equal than $\lambda.$
\end{proposition}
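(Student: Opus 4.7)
The plan is to mirror the proof of Proposition \ref{local-int1}, using the sharp boundary inequality \eqref{MT2-bis} in place of the Djadli--Malchiodi inequality on the closed sphere. Since $S$ touches $\de\Sph^4_+$, no extension-by-zero to $\Sph^4$ is needed: I would work directly on $\Sph^4_+$, but the cut-off must preserve the Neumann condition so that the truncated function still lies in $H^2_\de(\Sph^4_+)$.

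By Proposition \ref{pr:addition-constants}, one may assume $\overline{u}_{\Sph^4_+}=0$. First I would fix a smooth cut-off $\chi$ with $\chi\equiv 1$ on $(S)^{\delta/4}$, $\supp\chi\subset (S)^{\delta/2}$, $\norm{\chi}_{C^2(\Sph^4_+)}\leq C(\delta)$, and $\tfrac{\de\chi}{\de\nu_g}\equiv 0$ on $\Sph^3$, so that $\chi u\in H^2_\de(\Sph^4_+)$. Next I would pick a large eigenvalue $\lambda=\lambda(\e,\delta)$ of $P^{4,3}$ and split $u=u_1+u_2$ orthogonally with $u_1\in E_\lambda$ and $u_2=u^\dagger\in E_\lambda^\perp$. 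Since $\chi\equiv 1$ on $S$,
\begin{equation*}
\log\int_S e^{3u}\;\leq\; 3\norm{u_1}_{L^\infty(\Sph^4_+)}+\log\int_{\Sph^3}e^{3\chi u_2},
\end{equation*}
and applying \eqref{MT2-bis} to $\chi u_2$ yields
\begin{equation*}
\log\int_{\Sph^3}e^{3\chi u_2}\leq C+3\,\overline{(\chi u_2)}_{\Sph^4_+}+\tfrac{3}{16\pi^2}\scal{P^{4,3}(\chi u_2),\chi u_2}_{L^2(\Sph^4_+)}.
\end{equation*}

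Expanding $\Delta(\chi u_2)$ and $\nabla(\chi u_2)$ by Leibniz and Cauchy's inequality, the last inner product should be controlled by $\int_{(S)^\delta}\chi^2((\Delta u_2)^2+2\abs{\nabla u_2}^2)$ plus a remainder of the form $C(\delta)\intI(\abs{\nabla u_2}^2+u_2^2)$; the remainder will then be absorbed into $\e\scal{P^{4,3}u,u}$ using the spectral bound $\norm{u_2}_{L^2}^2\leq \lambda^{-1}\scal{P^{4,3}u_2,u_2}$ together with the Neumann interpolation $\norm{\nabla u_2}_{L^2}^2\leq \alpha\norm{\Delta u_2}_{L^2}^2+C_\alpha\norm{u_2}_{L^2}^2$. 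Likewise, by finite-dimensionality of $E_\lambda$ one has $\norm{u_1}_{L^\infty}\leq C_1\scal{P^{4,3}u_1,u_1}^{1/2}$, and $\abs{\overline{(\chi u_2)}_{\Sph^4_+}}\leq C\norm{u_2}_{L^2}$; both contributions are linear in $\scal{P^{4,3}u,u}^{1/2}$ and become $\e\scal{P^{4,3}u,u}+C_\e$ via $ab\leq \e a^2+C_\e b^2$. Up to relabelling $\e$ and bounding $\int_{(S)^{\delta/2}}\chi^2(\cdot)\leq \int_{(S)^\delta}(\cdot)$, this yields the claim. The only genuinely new point compared to Proposition \ref{local-int1} is the construction of $\chi$ with $\de\chi/\de\nu_g\equiv 0$ on $\Sph^3$ and $C^2$-norm bounded by $C(\delta)$, which is the main piece of technical bookkeeping; it can be arranged by writing $\chi$ as a product of a tangential bump and a function of $\dist(\cdot,\Sph^3)$ that is identically $1$ near the boundary.
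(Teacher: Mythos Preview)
Your proposal is correct and is precisely the adaptation the paper intends: the paper gives no detailed proof of Proposition \ref{local-int3}, merely stating that it ``follows from an easy adaptation of the arguments from Proposition \ref{local-int1}'' (replacing the Djadli--Malchiodi inequality on $\Sph^4$ by \eqref{MT2-bis}), and you have spelled this out, correctly isolating the one genuinely new technical point---the need for a cut-off with $\tfrac{\de\chi}{\de\nu_g}=0$ on $\Sph^3$ so that $\chi u_2\in H^2_\de(\Sph^4_+)$ and \eqref{MT2-bis} applies. One small remark: your appeal to Proposition \ref{pr:addition-constants} to justify the normalization $\overline{u}_{\Sph^4_+}=0$ is misplaced (that proposition concerns the functional $I_\beta$, and the inequality in question is \emph{not} invariant under addition of constants), but the normalization itself is exactly what the paper also imposes in the proof of Proposition \ref{local-int1} and is implicitly part of the statement.
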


The immediate consequence of the inequalities \eqref{MT1} and \eqref{MT2-bis} is a lower bound for the energy functional $I$ defined by \eqref{functional}.

\begin{proposition}\label{lowerbound} There exists a constant $C\in\R$, independent of $\beta$ and $u$, such that $I(u,\beta)\geq C$ for every $\beta \in [0,4\pi^2]$ and every $u$ in the space of definition of $I_\beta$.
\end{proposition}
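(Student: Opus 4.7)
The plan is to use the two sharp Moser-Trudinger inequalities \eqref{MT1} and \eqref{MT2-bis} after normalizing $u$ to have zero mean, exploiting an exact cancellation of the leading quadratic term across the full range $\beta \in [0, 4\pi^2]$.

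First I would use Proposition \ref{pr:addition-constants} to assume, without loss of generality, that $\bar u_{\Sph^4_+} = 0$; this kills the linear term $12 \intI u$. Since $Q \in C^2(\Sph^4_+)$ and $T \in C^2(\Sph^3)$, both $\|Q\|_\infty$ and $\|T\|_\infty$ are finite, so trivially
\begin{equation*}
\log \intI Qe^{4u} \leq \log \|Q\|_\infty + \log\intI e^{4u}, \qquad \log \intB Te^{3u} \leq \log \|T\|_\infty + \log\intB e^{3u}.
\end{equation*}
Applying Proposition \ref{Prop:MT1} to the first and Proposition \ref{prop:MT2-bis} to the second (both with $\bar u_{\Sph^4_+}=0$) gives upper bounds of the form $C_1 + \tfrac{1}{4\pi^2}\scal{P^{4,3}u,u}_{L^2}$ and $C_2 + \tfrac{3}{16\pi^2}\scal{P^{4,3}u,u}_{L^2}$ respectively, where $C_1, C_2$ are constants depending only on $\|Q\|_\infty$, $\|T\|_\infty$.

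Inserting these estimates into \eqref{functional}, the coefficient of $\scal{P^{4,3}u,u}_{L^2(\Sph^4_+)}$ becomes exactly
\begin{equation*}
1 - \frac{\beta}{4\pi^2} - \frac{4(4\pi^2-\beta)}{3}\cdot\frac{3}{16\pi^2} \;=\; 1 - \frac{\beta}{4\pi^2} - \frac{4\pi^2-\beta}{4\pi^2} \;=\; 0
\end{equation*}
for every $\beta \in [0, 4\pi^2]$. This is where the sharpness of the constants in the two Moser-Trudinger inequalities enters crucially — any sub-sharp version would produce a positive or negative residual quadratic term and destroy the uniformity. What remains after the cancellation is
\begin{equation*}
I(u,\beta) \;\geq\; -\beta(C_1 + \log\|Q\|_\infty) - \tfrac{4}{3}(4\pi^2-\beta)(C_2 + \log\|T\|_\infty) + \mathfrak f(\beta),
\end{equation*}
and since $\mathfrak f$ is continuous on the compact interval $[0,4\pi^2]$ by construction, the right-hand side is bounded below by a constant independent of $\beta$ and $u$.

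The main (minor) obstacle is handling the two degenerate endpoint cases of the definition in Proposition \ref{euler-lagrange}. At $\beta = 0$ one only has the boundary log-term with coefficient $-\tfrac{16\pi^2}{3}$, contributing a factor $\tfrac{16\pi^2}{3}\cdot\tfrac{3}{16\pi^2} = 1$ to the quadratic part; at $\beta = 4\pi^2$ only the interior log-term appears with coefficient $-4\pi^2$, contributing $4\pi^2 \cdot \tfrac{1}{4\pi^2} = 1$. In both endpoint cases the single contribution again matches the $\scal{P^{4,3}u,u}_{L^2}$ term from \eqref{functional}, so the cancellation and the resulting lower bound persist, completing the proof.
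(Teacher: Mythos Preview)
Your proof is correct and follows essentially the same approach as the paper: bound $Q,T$ by their sup-norms, apply the two sharp inequalities \eqref{MT1} and \eqref{MT2-bis}, and observe that the coefficient of $\scal{P^{4,3}u,u}_{L^2}$ vanishes identically in $\beta$. The only cosmetic difference is that you normalize $\bar u_{\Sph^4_+}=0$ via Proposition \ref{pr:addition-constants} at the outset, whereas the paper keeps track of the mean-value contribution coming from \eqref{MT1}--\eqref{MT2-bis} and checks the additional cancellation $12 - \tfrac{3\beta}{\pi^2} - \tfrac{3}{\pi^2}(4\pi^2-\beta)=0$ in the linear term $\intI u$; both routes are equivalent.
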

\begin{proof} Since we have $0<\intI Qe^{4u}$ and $0<\intB Te^{3u}$, we can bound $Q$ and $T$ by their $L^\infty-$norms, obtaining
	\begin{equation}\label{removeqt}
	\begin{split}
	\beta \log \intI Qe^{4u} &\leq \beta\log{\norm{Q}_{L^\infty(\Sph^4_+)}} + \beta \log \intI e^{4u}, \\
	\frac43(4\pi^2-\beta)\log \intB Te^{3u} &\leq \frac43(4\pi^2-\beta)\log{\norm{T}_{L^\infty(\Sph^3)}}+\frac43(4\pi^2-\beta)\log\intB e^{3u}.
	\end{split}
	\end{equation}
	Recalling the definition of $I(u,\beta)$ given by \eqref{functional}, it is enough to use \eqref{removeqt} and then interpolate \eqref{MT1} and \eqref{MT2-bis} to get 
	\begin{align*}
		I(u,\beta) &\geq \left(1-\frac{\beta}{4\pi^2}-\frac{4\pi^2-\beta}{4\pi^2}\right)\intI \left((\Delta_gu)^2+2\abs{\nabla_gu}^2\right) \\ &+\left(12-\frac{3\beta}{\pi^2}-\frac{3}{\pi^2}(4\pi^2-\beta)\right)\intI u \\ &-\beta\log{\norm{Q}_{L^\infty(\Sph^4_+)}} - \frac{4}3(4\pi^2-\beta)\log{\norm{T}_{L^\infty(\Sph^3)}} +\mathfrak f(\beta)  + C \\ &= \tilde C.
	\end{align*}
To remove the dependence of $\beta$ in the last line, we take the value $\beta\in [0,4\pi^2]$ (depending on $Q$ and $T$ but not $u$) that maximizes the function 
\begin{equation*}
\beta\mapsto \beta\log{\norm{Q}_{L^\infty(\Sph^4_+)}} + \frac{4}3(4\pi^2-\beta)\log{\norm{T}_{L^\infty(\Sph^3)}} +\mathfrak f(\beta).
\end{equation*}
\end{proof}

\section{Existence results%Limiting cases
}\label{sec:lc}
This section is dedicated to the limiting cases $\beta=0$ and $\beta=4\pi^2$ which correspond to $Q=0$ and $T=0$ respectively. In particular, we will prove the existence of solution for the curvature prescription problem \eqref{problem} under these conditions.
Let us begin with the case $\beta=4\pi^2$, since the reasoning is slightly more involved than the one corresponding to $\beta = 0$. It is easy to check that the critical points of $I_{4\pi^2}$ in $\mathcal H^{4\pi^2}$ are weak solutions of

\begin{equation}\label{problemQ0-2}
\left\lbrace\begin{array}{rll}
\Delta^2u-2\Delta u +6 & = 8\pi^2 \dfrac{Qe^{4u}}{\intI Qe^{4u}} & \text{in}\hsp \Sph^4_+, \\[0.5ex]
-\dfrac{\de}{\de \nu}\Delta u &= 0 & \text{on} \hsp \Sph^3, \\[0.5ex]
\dfrac{\de u}{\de \nu} &= 0 & \text{on} \hsp \Sph^3.
\end{array}\right.
\end{equation}
Indeed, by the Gauss-Bonnet-Chern Theorem, i.e. \eqref{GBC-sphere}, we have $\intI Q e^{4u} = 4\pi^2$, which shows that \eqref{problemQ0-2} is equivalent to 
\begin{equation*}%\label{problemQ0}
	\left\lbrace\begin{array}{rll}
		\Delta^2u-2\Delta u +6 &= 2 Qe^{4u} & \text{in}\hsp \Sph^4_+, \\[0.5ex]
		-\dfrac{\de}{\de \nu}\Delta u &= 0 & \text{on} \hsp \Sph^3, \\[0.5ex]
		\dfrac{\de u}{\de \nu} &= 0 & \text{on} \hsp \Sph^3,
	\end{array}\right.
\end{equation*}
which is exactly \eqref{problem} with $T=0$. Our main result for this particular case is the following.
\begin{theorem}\label{th:existenceQ} Let $Q\in C^2(\Sph^4_+)$ be a non-negative, positive somewhere and $\mathcal G$-symmetric function. Then, if $Q=0$ on $\fixed$, the functional $I_{4\pi^2}$ defined by \eqref{functional4pi2} admits a global minimizer, which is a solution of $\eqref{problem}$ with $T=0$.
\end{theorem}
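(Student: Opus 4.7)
The plan is to apply the direct method of the Calculus of Variations to $I_{4\pi^2}$ on $\HH^{4\pi^2}_{\mathcal G}$, which coincides with $H^2_{\mathcal G}(\Sph^4_+)$ since $Q\ge 0$ is positive somewhere. By Proposition~\ref{lowerbound}, the functional is bounded from below; we pick a minimizing sequence $(u_n)\subset H^2_{\mathcal G}(\Sph^4_+)$ and, using the invariance granted by Proposition~\ref{pr:addition-constants}, normalize $\bar u_{n,\Sph^4_+}=0$. The core task is to prove that $(u_n)$ is bounded in $H^2$; to this end we apply Proposition~\ref{pr:cases} to $Qe^{4u_n}$, using the covering from Lemma~\ref{lem:cov2} with parameters $\delta,t>0$ to be fixed later.

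The main obstacle---and the reason for the hypothesis $Q\equiv 0$ on $\fixed$---is the concentration scenario (i), where the mass of $Qe^{4u_n}$ accumulates on the tubular neighborhood $(U_\delta)^t$ of the fixed-point set, so no symmetric partner is available to split mass with and gain the Chen--Li improvement. By continuity of $Q$ and the hypothesis, $\norm{Q}_{L^\infty((U_\delta)^t)}$ can be made arbitrarily small by shrinking $\delta$ and $t$. In case (i) the trivial bound
\[
\intI Qe^{4u_n} \le \frac{\norm{Q}_{L^\infty((U_\delta)^t)}}{\gamma}\intI e^{4u_n},
\]
together with the global inequality \eqref{MT1} and $\bar u_{n,\Sph^4_+}=0$, yields
\[
I_{4\pi^2}(u_n) \ge -C - 4\pi^2\log\norm{Q}_{L^\infty((U_\delta)^t)},
\]
which exceeds $\inf I_{4\pi^2}+1$ once $\delta,t$ are small enough. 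Since $(u_n)$ is minimizing, this excludes case (i) for $n$ large.

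In the remaining cases (ii) and (iii), coercivity follows from the localized sharp inequalities combined with the $\mathcal G$-symmetry. In case (iii), Proposition~\ref{local-int1} applied to $\Omega$ yields directly
\[
\log\intI Qe^{4u_n} \le C + \left(\tfrac{1}{8\pi^2}+\e\right)\scal{P^{4,3}u_n,u_n}_{L^2(\Sph^4_+)}.
\]
In case (ii), the $\mathcal G$-symmetry of $Q$ and $u_n$ gives $\int_{(A_i)^t}Qe^{4u_n}=\int_{\varphi_i((A_i)^t)}Qe^{4u_n}$, and by Lemma~\ref{lem:cov2} the $\delta'$-tubular neighborhoods of these two sets are disjoint for $\delta'>0$ sufficiently small. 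Applying Proposition~\ref{local-int2} to both integrals and averaging the two inequalities---whose left-hand sides coincide and whose quadratic right-hand contributions live on disjoint subsets of $\Sph^4_+$---produces the same improved bound with constant $\tfrac{1}{8\pi^2}$ in place of $\tfrac{1}{4\pi^2}$; this is the Chen--Li gain from mass splitting. In either case,
\[
I_{4\pi^2}(u_n) \ge \left(\tfrac12 - 4\pi^2\e\right)\scal{P^{4,3}u_n,u_n}_{L^2(\Sph^4_+)} - C,
\]
which, for $\e$ small and in view of Lemma~\ref{norm-equiv}, gives boundedness of $(u_n)$ in $H^2_{\mathcal G}(\Sph^4_+)$.

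With $(u_n)$ bounded, we extract a subsequence $u_n\weakto u$ weakly in $H^2$ and strongly in every $L^p$. The quadratic form is weakly lower semicontinuous, and $\intI Qe^{4u_n}\to\intI Qe^{4u}$ by the uniform integrability of $e^{4u_n}$ granted by \eqref{MT1} combined with Vitali's theorem. Hence $u$ minimizes $I_{4\pi^2}$ on $H^2_{\mathcal G}(\Sph^4_+)$, and by the Principle of Symmetric Criticality it is a critical point on the whole $H^2_\de(\Sph^4_+)$; integrating the resulting Euler--Lagrange equation recovers $\intI Qe^{4u}=4\pi^2$, and therefore $u$ is a weak solution of \eqref{problem} with $T=0$.
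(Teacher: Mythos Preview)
Your proof is correct and follows essentially the same approach as the paper: the same case analysis via Proposition~\ref{pr:cases}, the same elimination of case~(i) by exploiting $Q\equiv 0$ on $\fixed$ together with~\eqref{MT1}, and the same Chen--Li gain in cases~(ii)--(iii) from Propositions~\ref{local-int1}--\ref{local-int2}. Your final paragraph is in fact more explicit than the paper's ``standard methods'': the Vitali/uniform-integrability argument and the appeal to the Principle of Symmetric Criticality are exactly what is needed, though the claim that \emph{integrating} the Euler--Lagrange equation yields $\intI Qe^{4u}=4\pi^2$ is a slight misstatement---what one actually does (as in Proposition~3.1) is add a constant to the minimizer, using the scale invariance of the normalized equation~\eqref{problemQ0-2}.
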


\begin{proof}
By Proposition \ref{lowerbound}, the functional $I_{4\pi^2}$ is bounded from below, and we can consider $u_n$ a minimizing sequence in $\HH^{4\pi^2}_{\mathcal G}=H^2_{\mathcal G}(\Sph^4_+)$. Thanks to Proposition \ref{pr:addition-constants}, we can assume without loss of generality that $\overline{(u_n)}_{\Sph^4_+}=0$.
\smallskip

Now, we distinguish the possible cases for $u_n$ given by Proposition \ref{pr:cases}. If $(i)$ holds (so $\mathcal F_\de \neq \emptyset$), the mass of the sequence $e^{4u_n}$, weighed against $Q$, concentrates around the fixed boundary points of $\mathcal G$, i.e., $\mathcal F_\de$. In this case, we proceed as follows: since $Q$ is a continuous function and $Q=0$ along $\mathcal F_\de$, given $\e>0$, it is possible to take $\delta>0$ and $t>0$ small enough in Lemmas \ref{lem:cov} and \ref{lem:cov2} so that $Q(x)<\e$ in $(U_\delta)^t$. Then, by $(i)$,
\begin{equation}\label{eq16-1} 
	\begin{split}
		\log \intI Qe^{4u_n} &\leq \log \int_{(U_\delta)^t} Qe^{4u_n}+\log {1\over \gamma} \leq \log \intI e^{4u_n}+\log \e + C.
	\end{split}
\end{equation}

\smallskip

Let $I^{Q,T}$ denote the energy functional $I$ given by \eqref{functional}, where we highlight the dependence on the prescribed curvatures $Q$ and $T$. By \eqref{eq16-1},

\begin{equation*}
	I_{4\pi^2}(u_n)\geq I^{1,0}(u_n,4\pi^2) -C\log \varepsilon+\tilde C \geq -C\log \varepsilon+\tilde C,
\end{equation*} 
where we have used that $I^{1,0}(u_n,\beta)$ is bounded from below by Proposition \ref{lowerbound}. If we take $\e$ sufficiently small, we negate that $u_n$ is a minimizing sequence, reaching a contradiction.

\medskip

Assume now that $(ii)$ holds. Then, we have
\begin{equation*}
	\intI Qe^{4u_n} \leq \frac{1}{\gamma} \int_{(A_i)^t} Qe^{4u_n}.
\end{equation*}
The previous inequality gives us the following lower bound for $I_{4\pi^2}(u_n)$:
\begin{equation}\label{eq16-2}
\begin{split}
I_{4\pi^2}(u_n)&\geq \scal{P^{4,3}u_n,u_n}_{L^2(\Sph^4_+)} + 12 \intI u_n - 4\pi^2\log\int_{(A_i)^t} Qe^{4u_n} + C.
\end{split}
\end{equation}
Now, we can use Proposition \ref{local-int2} twice, with $\Sigma=(A_i)^t$ and $\Sigma=\varphi((A_i)^t)$, obtaining
\begin{equation}\label{bound-part1}
	\begin{split}
4\pi^2\log\int_{(A_i)^t} Qe^{4u_n}&\leq \int_{(A_i)^t}\((\Delta u_n^\dagger)^2+2\abs{\nabla u_n^\dagger}^2\)+\e\scal{P^{4,3}u_n,u_n}_{L^2(\Sph^4_+)}+C,
\end{split}
\end{equation}
and, by the symmetry invariance, 
\begin{equation}\label{bound-part2}
\begin{split}
4\pi^2\log\int_{\varphi_i((A_i)^t)} Qe^{4u_n}&=4\pi^2\log\int_{(A_i)^t} Qe^{4u_n}\\&\leq \int_{\varphi_i((A_i)^t)}\((\Delta u_n^\dagger)^2+2\abs{\nabla u_n^\dagger}^2\)
+\e\scal{P^{4,3}u_n,u_n}_{L^2(\Sph^4_+)}+C,
\end{split}
\end{equation}
where the projection $u_n^\dagger$ is taken with respect to the maximum of the corresponding $\lambda$ for each choice of $\Sigma$. 

\smallskip

Adding \eqref{bound-part1} and \eqref{bound-part2} and using the fact that $(A_i)^t\cap \varphi((A_i)^t)=\emptyset$, one has
\begin{equation}\label{bound-part3}
	\begin{split}
8\pi^2 \log\int_{(A_i)^t} Qe^{4u_n} &\leq \int_{(A_i)^t\cup \varphi_i((A_i)^t)}\hspace*{-0.2cm} \((\Delta u_n^\dagger)^2+2\abs{\nabla u_n^\dagger}^2\)+\e\scal{P^{4,3}u_n,u_n}_{L^2(\Sph^4_+)}+C \\ &\leq \int_{\Sph^4_+}\((\Delta u_n^\dagger)^2+2\abs{\nabla u_n^\dagger}^2\)+\e\scal{P^{4,3}u_n,u_n}_{L^2(\Sph^4_+)}+C \\&\leq (1+\e)\scal{P^{4,3}u_n,u_n}_{L^2(\Sph^4_+)}+C,
\end{split}
\end{equation}
where in the last line we have used the fact that, by orthogonality, 
\begin{equation*}
\scal{P^{4,3}u_n^\dagger,u_n^\dagger}_{L^2(\Sph^4_+)}\leq \scal{P^{4,3}u_n,u_n}_{L^2(\Sph^4_+)}.
\end{equation*}
Finally, from \eqref{eq16-2} and \eqref{bound-part3} and using Lemma \ref{norm-equiv}, we derive the following lower bound for $I_{4\pi^2}(u_n)$:
\begin{equation}\label{bound-coerc}
\begin{split}
I_{4\pi^2}(u_n)&\geq \(1-\frac12-\e\)\scal{P^{4,3}u_n,u_n}_{L^2(\Sph^4_+)}- C\norm{u_n}_{H^2(\Sph^4_+)} + C \\ &\geq \tilde C\(\frac12-\e\)\norm{u_n}^2_{H^2(\Sph^4_+)}-C\norm{u_n}_{H^2(\Sph^4_+)}+C,
\end{split}
\end{equation}
where $\tilde C$ is a positive constant. Therefore, $u_n$ is bounded and a minimizer can be found using standard methods.

\medskip

To deal with (iii), we reason as in the previous case, using Proposition \ref{local-int1} (which already has the better constant $8\pi^2$), to obtain \eqref{bound-part3} and then \eqref{bound-coerc}.
\end{proof}
When $\mathcal F_\de =\emptyset$, we can drop the assumption that $Q\geq 0$, since the information provided by $Q$ becomes less relevant and we can use a simpler version of Proposition \ref{pr:cases} without the integral weight $Q$.

\begin{theorem}\label{th:q-nofixedpts} Let $Q\in C^2(\Sph^4_+)$ be a positive somewhere and $\mathcal G-$symmetric function. Assume that $\fixed =\emptyset$. Then, $I_{4\pi^2}$ admits a global minimizer in $\mathcal H^{4\pi^2}_{\mathcal G}$, which is a solution of \eqref{problem} with $T=0$.
\end{theorem}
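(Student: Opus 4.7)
The plan is to mirror the proof of Theorem~\ref{th:existenceQ}, changing only those steps that require the new hypotheses to be handled. Two features need attention: the absence of fixed boundary points of $\mathcal G$, which automatically rules out case~(i) of Proposition~\ref{pr:cases}, and the possible sign change of $Q$, which obstructs the direct application of that proposition to the weighted measure $Qe^{4u_n}$. Let $u_n \in \mathcal H^{4\pi^2}_{\mathcal G}$ be a minimizing sequence for $I_{4\pi^2}$, whose existence is guaranteed by Proposition~\ref{lowerbound}. I normalize $\overline{(u_n)}_{\Sph^4_+} = 0$ via Proposition~\ref{pr:addition-constants}. Since $\fixed = \emptyset$, I carry out the construction of Section~2.3 with $U_\delta = \emptyset$, so the covering of $\Sph^4_+$ reduces to $\{(A_i)^t, \varphi_i((A_i)^t) : i = 1, \ldots, N\} \cup \{\Omega\}$.

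To sidestep the sign issue, I apply Proposition~\ref{pr:cases} not to $Q e^{4u_n}$ but to the unweighted sequence $e^{4u_n}$; this is a trivial consequence of the same argument with $Q \equiv 1$. Up to a subsequence, there exists $\gamma \in (0,1)$ such that either
\begin{equation*}
	\int_{(A_i)^t} e^{4u_n} = \int_{\varphi_i((A_i)^t)} e^{4u_n} \geq \gamma \intI e^{4u_n}
\end{equation*}
for some $i$, or $\int_{\Omega} e^{4u_n} \geq \gamma \intI e^{4u_n}$. The pointwise bound $Q \leq \norm{Q}_{L^\infty(\Sph^4_+)}$ then gives
\begin{equation*}
	\log \intI Q e^{4u_n} \leq C + \log \int_B e^{4u_n},
\end{equation*}
where $B$ is the concentrating region. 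From here, cases~(ii) and~(iii) of Theorem~\ref{th:existenceQ} apply essentially verbatim: in the boundary alternative, Proposition~\ref{local-int2} is used twice, on $(A_i)^t$ and on $\varphi_i((A_i)^t)$, and the two estimates are added using the disjointness guaranteed by Lemma~\ref{lem:cov2} and the $\mathcal G$-invariance of $u_n$, producing the factor $1/2$ in front of $\scal{P^{4,3}u_n, u_n}_{L^2(\Sph^4_+)}$; in the interior alternative, Proposition~\ref{local-int1} is applied once, already providing the improved constant $8\pi^2$.

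Both alternatives lead to an estimate of the form \eqref{bound-coerc}, whence $u_n$ is bounded in $H^2_\de(\Sph^4_+)$ by Lemma~\ref{norm-equiv}. Passing to a weak limit $u_\infty \in H^2_{\mathcal G}(\Sph^4_+)$ and using compact Sobolev embeddings together with the higher-order Moser--Trudinger inequalities of Section~\ref{sec:MT}, the nonlinear term $\intI Q e^{4u_n}$ converges to $\intI Q e^{4u_\infty}$; the fact that $I_{4\pi^2}(u_n)$ stays bounded above forces $\intI Q e^{4u_n}$ to remain bounded below away from zero, so $u_\infty \in \mathcal H^{4\pi^2}_{\mathcal G}$. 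Weak lower semicontinuity of the quadratic part then identifies $u_\infty$ as the desired minimizer, and hence a weak solution of \eqref{problem} with $T = 0$. The step I expect to be most delicate is justifying that the unweighted localization is still informative once $Q$ is sign-changing; this is precisely the role of the pointwise bound $Q \leq \norm{Q}_{L^\infty}$, and it is what allows the non-negativity hypothesis to be dropped once $\fixed = \emptyset$ has removed the obstruction from case~(i).
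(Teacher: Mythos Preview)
Your proof is correct and follows the same route as the paper: you apply the covering argument of Proposition~\ref{pr:cases} with $Q\equiv 1$ (which is exactly what the paper does), use the pointwise bound $Q\leq\norm{Q}_{L^\infty(\Sph^4_+)}$ to transfer the localized estimate to the weighted integral, and then argue coercivity via Propositions~\ref{local-int1} and~\ref{local-int2} just as in Theorem~\ref{th:existenceQ}. The only cosmetic difference is that the paper phrases the final step as a contradiction (if $\intI Qe^{4u_0}=0$ then $I_{4\pi^2}(u_n)\to+\infty$), whereas you state the equivalent positive version that $I_{4\pi^2}(u_n)$ bounded above forces $\intI Qe^{4u_n}$ to stay bounded away from zero; these are the same argument.
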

\begin{proof} By Proposition \ref{lowerbound}, the functional $I_{4\pi^2}$ is bounded from below on $\mathcal H^{4\pi^2}_{\mathcal G}$, and we can consider a minimizing sequence $u_n$. Since $I_{4\pi^2}$ is invariant under the addition of constants (Proposition \ref{pr:addition-constants}), we can assume without loss of generality that $\overline{(u_n)}_{\Sph^4_+}=0$.
	
	\smallskip
Since $\fixed =\emptyset$, we take now $U_\delta=\emptyset$ and case $(i)$ in Proposition \ref{pr:cases} cannot happen. Thus, if we take $Q\equiv1$, Proposition \ref{pr:cases}  gives the existence of a $\gamma\in(0,1)$ such that the following alternative holds:
\begin{enumerate}
	\item[(ii)] either there exists $(A_i)^t\in\mathcal B$ such that 
	\begin{equation*}\int_{(A_i)^t}e^{4u_n}=\int_{\varphi_i\((A_i\)^t)}e^{4u_n}\geq \gamma\intI e^{4u_n}, \hsp\text{or}
	\end{equation*}
	\item[(iii)] 
	\begin{equation*}
		\int_{\Omega}e^{4u_n}\geq \gamma\intI e^{4u_n}.
	\end{equation*}
\end{enumerate}
Let $B$ denote $(A_i)^t$ or $\Omega$ depending on the case. Reasoning as in Theorem \ref{th:existenceQ}, we obtain the analogue of \eqref{bound-part3}, that is,
\begin{equation}\label{bound-1-nofixedpts}
\log \int_B e^{4u_n} \leq \(\frac{1}{8\pi^2}+\e\) \scal{P^{4,3}u_n,u_n}_{L^2(\Sph^4_+)} +C.
\end{equation}
Using \eqref{bound-1-nofixedpts}, Lemma \eqref{norm-equiv} and the estimate $Q\leq \norm{Q}_{L^\infty(\Sph^4_+)}$, we get
\begin{equation*}
\begin{split}
I_{4\pi^2}(u_n) &\geq \scal{P^{4,3}u_n,u_n}_{L^2(\Sph^4_+)} +12\intI u_n - 4\pi^2\log \int_B e^{4u_n}-\log{\norm{Q}_{L^\infty(\Sph^4_+)}}\\ \quad& \quad+\mathcal F(4\pi^2)+\log\frac1\gamma
\\ &\geq \(\frac12-\e\) \scal{P^{4,3}u_n,u_n}_{L^2(\Sph^4_+)}+12\intI u_n + C
\\ &\geq C\(\frac12-\e\) \norm{u_n}^2_{H^2(\Sph^4_+)}-C\norm{u_n}_{H^2(\Sph^4_+)} + C.
\end{split}
\end{equation*}
Hence, $I_{4\pi^2}$ is coercive on $\mathcal H^{4\pi^2}$ and it follows from standard minimization arguments that $u_n\to u_0$ strongly in $H^2_G(\Sph^4_+)$, with $I_{4\pi^2}(u_0)\leq I_{4\pi^2}(u)$ for every $u\in \mathcal H^{4\pi^2}$. 

\medskip

In this case, it is also necessary to check that $u_0\in \text{int}(\mathcal H^{4\pi^2})$. Assume by contradiction that $\intI Qe^{4u_0}=0$. Then, by compactness, we have  
\begin{equation*}
	\intI Qe^{4u_n}\to \intI Qe^{4u_0}=0.
\end{equation*}
(See \cite{Trudinger}.) Therefore, $I_{4\pi^2}(u_n)\to +\infty$, which contradicts that $u_n$ is a minimizing sequence.
\end{proof}
Similarly, when $\beta=0$, we can see that critical points of $I_0$ in $\mathcal H^0$ produce weak solutions of $\eqref{problem}$ with $Q=0$, this is,
\begin{equation*}%\label{problem0T}
	\left\lbrace\begin{array}{rll}
		\Delta_g^2 u - 2\Delta_gu + 6 &= 0 & \text{in}\quad \Sph^4_+, \\[0.5ex]
		-\frac{\de}{\de \nu_g}\Delta_gu &= 2T e^{3u}  & \text{on}\quad \Sph^3, \\[0.5ex]
		\frac{\de u}{\de \nu_g} &=0 & \text{on}\quad \Sph^3.
	\end{array} \right.
\end{equation*}

Proposition \ref{local-int3} allows us to adapt the previous argument to this situation, and obtain the following existence result.

\begin{theorem}\label{th:existenceT} 
Let  $T\in C^2(\Sph^3)$ be a non-negative, positive somewhere and $\mathcal G-$symmetric function. Then, if $T=0$ on $ \mathcal F_\de $, the functional $I_0$ defined by \eqref{functional0} admits a global minimizer, which is a solution of \eqref{problem} with $Q=0$. 	
\end{theorem}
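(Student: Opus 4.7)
The proof follows the scheme of Theorem \ref{th:existenceQ} line by line, with the interior integral $\intI Qe^{4u}$ replaced by the boundary integral $\intB Te^{3u}$, Proposition \ref{local-int2} replaced by its boundary analogue Proposition \ref{local-int3}, and the coefficient $4\pi^2$ from $I_{4\pi^2}$ replaced by $\tfrac{16\pi^2}{3}$ from $I_0$; the arithmetic still balances so that the symmetry-induced doubling is enough to recover coercivity. Let $(u_n)$ be a minimizing sequence in $\HH^0_{\mathcal G} = H^2_{\mathcal G}(\Sph^4_+)$, noting that $\HH^0 = H^2_\de(\Sph^4_+)$ since $T\geq 0$ is positive somewhere. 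By Proposition \ref{pr:addition-constants} we may normalize $\overline{(u_n)}_{\Sph^4_+}=0$, and by Proposition \ref{pr:cases} applied to the mass of $Te^{3u_n}$ on $\Sph^3$, up to a subsequence either the mass concentrates in $U_\delta$ (case (a)) or it splits evenly between $A_i$ and $\varphi_i(A_i)$ for some $i$ (case (b)).

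In case (a), the continuity of $T$ and the hypothesis $T\equiv 0$ on $\fixed$ allow us to shrink $\delta$ so that $T<\varepsilon$ on $U_\delta$, which yields
\begin{equation*}
\log\intB Te^{3u_n} \;\leq\; \log\tfrac{1}{\lambda} + \log\int_{U_\delta}Te^{3u_n} \;\leq\; \log\varepsilon + \log\intB e^{3u_n} + C.
\end{equation*}
Comparing $I_0$ with the functional $I_0^{T\equiv 1}$ (which is bounded below by Proposition \ref{lowerbound}), one obtains
\begin{equation*}
I_0(u_n) \;\geq\; I_0^{T\equiv 1}(u_n) - \tfrac{16\pi^2}{3}\log\varepsilon + C \;\geq\; -\tfrac{16\pi^2}{3}\log\varepsilon + \tilde C,
\end{equation*}
and taking $\varepsilon$ small contradicts the boundedness of a minimizing sequence; so case (a) cannot occur.

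In case (b), by Lemma \ref{lem:cov} the sets $A_i$ and $\varphi_i(A_i)$ are disjoint with positive separation. Applying Proposition \ref{local-int3} to both of them, exploiting $\mathcal G$-invariance and absorbing $\norm{T}_{L^\infty(\Sph^3)}$, we sum the two bounds and use the orthogonality estimate $\scal{P^{4,3}u_n^\dagger,u_n^\dagger}_{L^2(\Sph^4_+)}\leq\scal{P^{4,3}u_n,u_n}_{L^2(\Sph^4_+)}$ to get
\begin{equation*}
2\log\int_{A_i}Te^{3u_n} \;\leq\; \tfrac{3}{16\pi^2}\scal{P^{4,3}u_n,u_n}_{L^2(\Sph^4_+)} + 2\varepsilon\scal{P^{4,3}u_n,u_n}_{L^2(\Sph^4_+)} + C.
\end{equation*}
The crucial arithmetic is that $\tfrac{16\pi^2}{3}\cdot\tfrac{1}{2}\cdot\tfrac{3}{16\pi^2} = \tfrac{1}{2}$, so combining this with $\intB Te^{3u_n}\leq\tfrac{1}{\lambda}\int_{A_i}Te^{3u_n}$ and Lemma \ref{norm-equiv},
\begin{equation*}
I_0(u_n) \;\geq\; \left(\tfrac12-\varepsilon\right)\scal{P^{4,3}u_n,u_n}_{L^2(\Sph^4_+)} + 12\intI u_n + C \;\geq\; \hat C\left(\tfrac12-\varepsilon\right)\norm{u_n}^2_{H^2(\Sph^4_+)} - C\norm{u_n}_{H^2(\Sph^4_+)} - C,
\end{equation*}
hence $(u_n)$ is bounded in $H^2_{\mathcal G}(\Sph^4_+)$. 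A weak limit $u_0$ exists, and lower semicontinuity of $\scal{P^{4,3}\cdot,\cdot}_{L^2(\Sph^4_+)}$ combined with strong convergence of $\intI u_n$ and of $\intB Te^{3u_n}$ (via the Moser--Trudinger embedding from \eqref{MT2-bis}) shows that $u_0$ minimizes $I_0$ on $\HH^0_{\mathcal G}$; the principle of symmetric criticality then promotes $u_0$ to a critical point of $I_0$ on $\HH^0$, i.e.\ a weak solution of \eqref{problem} with $Q=0$. The key difficulty, exactly as in Theorem \ref{th:existenceQ}, is that the constant $\tfrac{3}{16\pi^2}$ in Proposition \ref{local-int3} is sharp: the factor $\tfrac12$ absorption needed for coercivity is available \emph{only} through the $\mathcal G$-duplication of the mass-supporting set, and this is precisely what the hypothesis $T\equiv 0$ on $\fixed$ guarantees by ruling out concentration at the fixed set.
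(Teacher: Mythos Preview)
Your proof is correct and follows exactly the approach the paper indicates: the paper does not write out a separate proof for this theorem but simply states that Proposition \ref{local-int3} allows one to adapt the argument of Theorem \ref{th:existenceQ}, which is precisely what you have done, with the correct arithmetic $\tfrac{16\pi^2}{3}\cdot\tfrac12\cdot\tfrac{3}{16\pi^2}=\tfrac12$ recovering the coercivity gap. Your treatment is in fact slightly more explicit than the paper's, spelling out the passage to the limit and invoking symmetric criticality.
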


Again, when $\mathcal F_\de =\emptyset$, we can improve our result dropping the non-negativity asumption on $T$.
\begin{theorem}\label{th:t-nofixedpts} Let $T\in C^2(\Sph^3)$ be a positive somewhere and $\mathcal G-$symmetric function. Assume that $\mathcal F_\de  =\emptyset$. Then, $I_0$ admits a global minimizer in $\mathcal H^0_{\mathcal G}$, which is a solution of \eqref{problem} with $Q=0$.
\end{theorem}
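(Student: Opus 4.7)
The strategy mirrors that of Theorem~\ref{th:q-nofixedpts}, with the boundary integral replacing the interior one. By Proposition~\ref{lowerbound}, $I_0$ is bounded from below on $\mathcal{H}^0_\mathcal{G}$, so I take a minimizing sequence $(u_n) \subset \mathcal{H}^0_\mathcal{G}$ and, using Proposition~\ref{pr:addition-constants}, normalize it so that $\overline{(u_n)}_{\Sph^4_+} = 0$. The main task is to show that $(u_n)$ stays bounded in $H^2(\Sph^4_+)$; the direct method then yields a candidate minimizer.

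Since $T$ is not assumed non-negative, I cannot apply Proposition~\ref{pr:cases} to $T$ directly. Instead, I first estimate $\intB T e^{3u_n} \leq \norm{T}_{L^\infty(\Sph^3)} \intB e^{3u_n}$, and then apply Proposition~\ref{pr:cases} with $T \equiv 1$. Because $\mathcal{F}_\de = \emptyset$, we take $U_\delta = \emptyset$, so alternative~(a) is ruled out; only alternative~(b) remains, giving $\lambda \in (0,1)$ and an index $i \in \{1,\dots,N\}$ such that, up to a subsequence,
\begin{equation*}
\int_{A_i} e^{3u_n} = \int_{\varphi_i(A_i)} e^{3u_n} \geq \lambda \intB e^{3u_n}.
\end{equation*}
I then apply the localized inequality of Proposition~\ref{local-int3} to both $A_i$ and $\varphi_i(A_i)$; the gap between $A_i^\delta$ and $\varphi_i(A_i)^\delta$ guaranteed by Lemma~\ref{lem:cov} (for $\rho$ and $\delta$ small) lets me sum the two estimates and exploit the symmetry equality above to halve the best constant, obtaining
\begin{equation*}
\tfrac{16\pi^2}{3}\log \intB T e^{3u_n} \leq \left(\tfrac{1}{2}+\e\right)\scal{P^{4,3}u_n,u_n}_{L^2(\Sph^4_+)} + C.
\end{equation*}

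Substituting this into $I_0(u_n)$ and invoking Lemma~\ref{norm-equiv}, together with Sobolev control of the linear term $12\intI u_n$, I arrive at
\begin{equation*}
I_0(u_n) \geq \left(\tfrac{1}{2}-\e\right)\norm{u_n}_{H^2(\Sph^4_+)}^2 - C\norm{u_n}_{H^2(\Sph^4_+)} + C,
\end{equation*}
so $(u_n)$ is bounded in $H^2_{\mathcal G}(\Sph^4_+)$ and admits a weak limit $u_0$ (up to a subsequence).

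The remaining delicate point, which does not appear in the non-negative case, is to verify that $u_0 \in \mathcal{H}^0_\mathcal{G}$, that is, $\intB T e^{3u_0} > 0$. I would argue as follows: rearranging the definition of $I_0(u_n)$ using $\intI u_n = 0$ gives
\begin{equation*}
\tfrac{16\pi^2}{3}\log\intB T e^{3u_n} = \scal{P^{4,3}u_n,u_n}_{L^2(\Sph^4_+)} + \tfrac{16\pi^2}{3}\log(4\pi^2) - I_0(u_n),
\end{equation*}
and since $I_0(u_n)$ is bounded above (as $(u_n)$ is minimizing) while $\scal{P^{4,3}u_n,u_n}_{L^2(\Sph^4_+)}\geq 0$, the integral $\intB T e^{3u_n}$ stays uniformly away from zero. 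By Moser--Trudinger compactness (cf.\ \cite{Trudinger}), $e^{3u_n}\to e^{3u_0}$ strongly in $L^q(\Sph^3)$ for every $q<\infty$, so $\intB T e^{3u_n}\to\intB T e^{3u_0}>0$ and $u_0\in\mathcal{H}^0_{\mathcal G}$. Weak lower semicontinuity of the quadratic part then shows that $u_0$ realises the infimum, and its Euler--Lagrange equation is exactly \eqref{problem} with $Q=0$. This last verification is the main obstacle: without sign control on $T$, the boundary of $\mathcal{H}^0_{\mathcal G}$ could in principle be approached along the minimizing sequence, and one must use the structure of the functional itself to rule this out.
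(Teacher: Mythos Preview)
Your proof is correct and follows essentially the same route the paper intends: it is the boundary analogue of the argument for Theorem~\ref{th:q-nofixedpts}, replacing Propositions~\ref{local-int1}--\ref{local-int2} by Proposition~\ref{local-int3} and cases $(ii)$--$(iii)$ by case~$(b)$ of Proposition~\ref{pr:cases} (applied with $T\equiv 1$). The only cosmetic difference is in the last step: the paper (in the proof of Theorem~\ref{th:q-nofixedpts}) argues by contradiction that if $\intB Te^{3u_0}=0$ then compactness forces $I_0(u_n)\to+\infty$, whereas you rearrange the functional to see directly that $\intB Te^{3u_n}$ is bounded below; the two arguments are equivalent. One harmless redundancy: after normalizing $\overline{(u_n)}_{\Sph^4_+}=0$ the linear term $12\intI u_n$ vanishes identically, so no Sobolev control is needed there.
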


\bigskip

\section{Proof of Theorem \ref{maintheorem}}
This section is devoted to the proof of existence of solution for the prescribed curvature problem \eqref{problem} in the most general case under the assumptions of Theorem \ref{maintheorem}. Since Theorems \ref{th:existenceQ} and \ref{th:existenceT} from Section \ref{sec:lc} already cover the case in which one of the curvatures is identically equal to zero, we will assume here that $Q$ and $T$ are both positive somewhere.

\medskip

 By Proposition \ref{lowerbound}, the energy functional $I$ defined by \eqref{functional} is bounded from below on $H^2_{\mathcal G}(\Sph^4_+)\times [0,4\pi^2]$, and we set
\begin{equation*}
\alpha=\inf\left\lbrace I(u,\beta): \beta\in (0,4\pi^2),\: u\in H_{\mathcal G}^2(\Sph^4_+)\right\rbrace.
\end{equation*}
Let $(u_n,\beta_n)$ be a minimizing sequence. By Proposition \ref{pr:addition-constants}, we can assume without loss of generality that $\overline{(u_n)}_{\Sph^4_+}=0.$ 

\medskip

First, it is convenient to point out the following identity:
\begin{equation}\label{order-minimum} 
	\alpha = \inf_{\beta\in(0,4\pi^2)} \left\lbrace \inf_{u\in H^2_{\mathcal G}(\Sph^4_+)} I(u,\beta) \right\rbrace.
\end{equation}
To prove it, let us call $\gamma$ the right-hand side of \eqref{order-minimum}. By definition of infimum, for any $\e>0$ there exists $\beta^*\in(0,4\pi^2)$ such that $\inf_{u\in H^2_{\mathcal G}(\Sph^4_+)} I(u,\beta^*)<\gamma+\e$. By taking infimum in $\beta^*\in(0,4\pi^2)$ and letting $\varepsilon\to 0,$ we obtain $\alpha\leq\gamma$. Conversely, for any $\e>0$ there exists $(u^*,\beta^*)\in H^2_{\mathcal G}(\Sph^4_+)\times(0,4\pi^2)$ such that $I(u^*,\beta^*)<\alpha+\varepsilon$. If we take infimum in $u^*\in H^2_\mathcal{G}(\Sph^4_+)$ and then in $\beta^*\in(0,4\pi^2)$, we obtain $\gamma < \alpha + \e$, and it suffices to let $\e\to 0$ to get $\alpha = \gamma$. 
\medskip

In view of \eqref{order-minimum}, we will first consider a fixed $\beta\in [0,4\pi^2]$ and work with the sequence $I(u_n,\beta)$, and then we will minimize with respect to $\beta.$ Our intention is to extend the results achieved for the cases $\beta=0$ and $\beta=4\pi^2$ in Theorems \ref{th:existenceQ} and \ref{th:existenceT}, that is, that the functional $I_\beta$ admits a global minimizer in $H^2(\Sph^4_+)$.	

\smallskip

Let $\beta\in(0,4\pi^2)$, consider $u_n$ a minimizing sequence in $H^2_{\mathcal G}(\Sph^4_+)\times(0,4\pi^2)$ and recall the energy functional defined by \eqref{functional}, i.e.,
\begin{equation*}
\begin{split}
I_\beta(u_n) &= \scal{P^{4,3}u_n,u_n}_{L^2(\Sph^4_+)}+12\intI u_n - \beta \log \intI Qe^{4u_n} \\ &-\frac43(4\pi^2-\beta)\log\intB Te^{3u_n}+\mathfrak f(\beta),
\end{split}
\end{equation*}
with $\mathfrak f$ given by \eqref{def:f}. As in the proof of Theorem \ref{th:existenceQ}, we need to consider all possible cases given by Proposition \ref{pr:cases} for the minimizing sequence. First, we observe that cases $(i)$ and $(a)$ never happen. Let us assume, for instance, that $(a)$ holds. Then, by continuity of $T$, since $T=0$ on $\fixed$, for any given $\e>0$ we find a $\delta>0$ small enough such that $T<\e$ on $U_\delta$. Then, by hypothesis, there exists $\lambda\in(0,1)$ such that

\begin{equation*}
\log \intB Te^{3u_n} < \log \int_{U_\delta} T e^{3u_n} + \log \frac1\lambda \leq \log \intB e^{3u_n} + \log\e + C.
\end{equation*}
The previous equation gives us the following lower bound for $I_\beta$:
\begin{equation*}
I_\beta(u_n)\geq I^{Q,1}(u_n,\beta) - \frac43(4\pi^2-\beta)\log\e + C \geq - \frac43(4\pi^2-\beta)\log \e + C,
\end{equation*}
where we have used Proposition \ref{lowerbound} to bound $I^{Q,1}(u_n,\beta)$ from below by a constant. This contradicts that $u_n$ is a minimizing sequence. Case $(i)$ can be discarded in a similar fashion. Therefore, $(b)$ and either $(ii)$ or $(iii)$ hold. 

\smallskip
Reasoning as in \eqref{bound-part3}, by $(b)$ and Proposition \ref{local-int3}, one has
\begin{equation}\label{info-b}
	\log \int_{A_i}Te^{3u_n} \leq \(\frac{3}{32\pi^2}+\e\) \scal{P^{4,3}u_n,u_n}_{L^2(\Sph^4_+)} + C.
\end{equation}
Moreover, for the integral term on $\Sph^4_+$ we have the exact analogue of \eqref{bound-part3};
\begin{equation}\label{info-ii}
\log \int_{B}Qe^{4u_n} \leq \(\frac{1}{8\pi^2}+\e\) \scal{P^{4,3}u_n,u_n}_{L^2(\Sph^4_+)} + C,
\end{equation}
where $B$ equals $(A_j)^t$ or $\Omega$ depending on whether case (i) or (ii) is satisfied. Continuing with this notation, interpolating \eqref{info-b} and \eqref{info-ii}, we derive the following lower bound for $I_\beta(u_n)$:

\begin{equation*}
\begin{split}
I_\beta(u_n) &\geq \scal{P^{4,3}u_n,u_n}_{L^2(\Sph^4_+)} + 12\intI u_n - \beta \log \int_B Qe^{u_n} \\ & -\frac43(4\pi^2-\beta)\log\int_{A_i} Te^{3u_n} + C \\ &\geq \(1-\frac{\beta}{8\pi^2}-\frac{4\pi^2-\beta}{8\pi^2}-\e\)\scal{P^{4,3}u_n,u_n}_{L^2(\Sph^4_+)}+ 12\intI u_n+C \\ &\geq \(\frac{1}{2}-\e\)\norm{u_n}^2_{H^2(\Sph^4_+)}-C\norm{u_n}_{H^2(\Sph^4_+)}+C.
\end{split}
\end{equation*}
Since the leading constant is independent of $\beta$, we conclude that $u_n$ is bounded in $H^2(\Sph^4_+)$. Following standard minimization arguments, one can show that $u_n\to u_\beta$ in $H^2_{\mathcal G}(\Sph^4_+)$, which is a global minimizer of $I_\beta$.

\medskip

Finally, we minimize in $\beta\in (0,4\pi^2)$. By compactness, it is clear that, up to considering a subsequence, one has $\beta_n\to \beta\in [0,4\pi^2]$. The proof concludes if we show that $\beta \in (0,4\pi^2)$.

\medskip 

Let us define $m:[0,4\pi^2]\to \R$ by $m(\beta)=\min\left\lbrace I_\beta(u):\:u\in H^2_\mathcal{G}(\Sph^4_+)\right\rbrace = I_\beta(u_\beta)$. By \eqref{order-minimum}, we have $m(\beta_n)\to m(\beta) = \alpha.$ 

\smallskip

Suppose by contradiction that $\beta_n\to 0$. Since $T,Q\geq 0$, we have $u_n\to u_0\in \mathcal H_\mathcal{G}$, and the following inequality holds
\begin{equation*}
m(0)-m(\beta_n)=I(u_0,0)-I(u_n,\beta_n)\geq I(u_0,0)-I(u_0,\beta_n).
\end{equation*}
Therefore,
\begin{equation*}%\label{estimates-0} 
\begin{split}
	m(0)-m(\beta_n)&\geq I(u_0,0)-I(u_0,\beta_n)\\ &= \scal{P^{4,3}u_0,u_0}_{L^2(\Sph^4_+)}+12\intI u_0 -\frac{16\pi^2}{3}\log \intB Te^{3u_0} +\mathfrak f(0)\\&-\left(\scal{P^{4,3}u_0,u_0}_{L^2(\Sph^4_+)}+12\intI u_0-\beta_n \log\intI Qe^{4u_0}\right. \\&\left.-\frac43 (4\pi^2-\beta_n)\log \intB Te^{3u_0} +\mathfrak f(\beta_n)\right) 
	\\&=\frac{16\pi^2}{3}\log(4\pi^2)+\beta_n \log\left(\frac{\intI Qe^{4u_0}}{\intB Te^{3u_0}}\right) -\beta_n\log\beta_n \\&-\frac43(4\pi^2-\beta_n)\log(4\pi^2-\beta_n)-\frac{\beta_n}{3}.
\end{split}
\end{equation*} 
At first order, the last identity shows that the main term as $\beta_n\to 0$ is $-\beta_n\log\beta_n$. Hence, $m(0)-m(\beta_n)>0$, which contradicts that $\beta=0$ is a minimizer.

\smallskip

Analogously, if we suppose $\beta_n\to 4\pi^2$ and we write $\beta_n = 4\pi^2-\gamma_n$, with $\gamma_n\to 0$, we would have
\begin{equation*}
\begin{split}
m(4\pi^2)-m(\beta_n)&\geq I(u_{4\pi^2},4\pi^2)-I(u_{4\pi^2},\beta_n)
\\&=
4\pi^2\log(4\pi^2)+\gamma_n \log\left(\frac{\intI Qe^{4v_0}}{(\intB Te^{3v_0})^{4/3}}\right) -\beta_n\log\beta_n \\&-\frac43\gamma_n\log\gamma_n+\frac{\gamma_n}{3},
				\end{split}
\end{equation*} 
and the same conclusion holds.

\nocite{*}


\begin{thebibliography}{99}
	
	\bibitem{aubin79}
	T. Aubin, 
	\textit{Meilleures constantes dans le theoreme d'inclusion de Sobolev et un theorem de Fredholm non lineaire pour la transformation conforme de la courbure scalaire}, 
	J. Funct. Anal. \textbf{32} (1979), no. 2, 148-174.
	
\bibitem{Aubin-Variedad_sin_borde}	T. Aubin, \textit{Some nonlinear problems in Riemannian geometry}, \textit{Springer Monographs in
	Mathematics} Springer-Verlag, Berlin, (1998).
	
	\bibitem{branson1985}
	T.P. Branson,
	\textit{Differential operators canonically associated to a conformal structure,}
	\textit{Mathematica Scandinavica} \textbf{57}, (1985), no. 2, 293-345.
	
\bibitem{BransonOrsted}	{T.P. Branson and B. {$\O$rsted}, \textit{Explicit functional determinants in four dimensions}, \textit{Proc. Amer. Math. Soc.} \textbf{113} (1991), 669–682.}
	
	\bibitem{beckner}
	W. Beckner, 
	\textit{Sharp Sobolev inequalities on the sphere and the Moser-Trudinger inequality,} \textit{Annals of Math.}, \textbf{138} (1993), 213-242.
	
	\bibitem{brendle2003}
	S. Brendle,
	\textit{Global existence and convergence for a higher order flow in conformal geometry,}
	\textit{Annals of Math.} \textbf{158} (2003), 323-343.
	
	
	\bibitem{BrendleChen}
		S. Brendle and S.-Y. S. Chen,
	\textit{An existence theorem for the Yamabe problem on
		manifolds with boundary,} \textit{J. Eur. Math. Soc.} \textbf{16}, 991–1016.
	
	
    \bibitem{CaseChang}
  J. Case and S.-Y. A. Chang, 
    \textit{On fractional GJMS operators}
    \textit{Comm. Pure Appl. Math.}, \textbf{69}, (2016)
    1017–1061.
    
      \bibitem{Case}
    J. Case,
    \textit{Sharp weighted Sobolev trace inequalities and fractional powers of the Laplacian}
    \textit{J. Funct.
    	Anal.}, \textbf{279} (2020) 33.
    		\bibitem{Case2}
    	J. Case, 
    	\textit{Boundary operators associated with the Paneitz operator}’, \textit{Indiana Univ. Math. J.} \textbf{67} (2018)
    	293–327.
    	\bibitem{LiLiLiu}
    	J. Li, Y. Li and P. Liu 
    \textit	{The curvature on a $4$-dimensional Riemannian manifold  with$\int_M Q\,d V_g=8\pi^2$}
   \textit {	Advances in Mathematics}
\textbf{ 231}, Issues 3–4, October–November 2012, 2194-2223.
    	
    	\bibitem{CaseEtAl}  
    	J. Case, Y. Lin, S. McKeown, B.C. Ndiaye and P. Yang,  
    	\textit{A fourth-order Cherrier-Escobar problem with prescribed corner behavior on the half-ball},
    	\textit{Preprint arXiv:2401.00937}, (2024).
    	    
    	    
    	    
   \bibitem{ChangQ}
   S.-Y. A. Chang, 
   {\textit{Conformal Geometry on four manifolds-Noether,}\textit{ Lecture Proc. Int. Cong. of Math
   		– 2018 Rio de Janeiro}, \textbf{1} (119–146)}
   	
   	\bibitem{Qcurvature}
   	S-Y Chang,
   	M. Eastwood,
   	B. \O rsted and
   	P.C. Yang
 \textit{
   	What is $Q$-Curvature?}, 
   \textit{	Acta Applicandae Mathematicae} \textbf{102} (2008), 2-3.
   	102(2-3)
   
   \bibitem{ChangGonzalez}
   {S.-Y. A. Chang and M. González, 
   	\textit{Fractional Laplacian in conformal geometry}, \textit{Adv. Math.} \textbf{226} (2011)
   	1410–1432.}
   
   \bibitem{chang-gursky-yang1993}
   S.Y.A. Chang, M. Gursky and P.C. Yang,
   \textit{The scalar curvature equation on $2$ and $3$-spheres},
   \textit{Calc. Var. PDE} \textbf{1} (1993), 205-229.
   
   \bibitem{chang-qing1997}
   S.Y.A. Chang and J. Qing,
   \textit{The Zeta Functional Determinants on Manifolds with Boundary I. The Formula},
   \textit{J. Funct. Anal.} \textbf{147} (1997), 327–362.
   
   \bibitem{chang-qing1997-II}
   S.Y.A. Chang and J. Qing,
   \textit{The Zeta Functional Determinants on Manifolds with Boundary II. Extremal Metrics and Compactness of Isospectral Set},
   \textit{J. Funct. Anal.} \textbf{147} (1997), 363–399.
   
   \bibitem{chang-yang-Q}
   S.Y.A. Chang, P.C. Yang,
   \textit{Extremal metrics of zeta function determinants on 4-manifolds,}
   \textit{Annals of Math.} \textbf{142} (1995), 171-212.
   
   \bibitem{chen-li91}
   W.X. Chen and C. Li, \textit{Prescribing Gaussian curvatures on surfaces with conical singularities}, J. Geom. Anal. \textbf{1} (1991), no. 4, pp. 359-372.
   
   \bibitem{cruzruiz2018}
   S. Cruz-Blázquez and D. Ruiz,
   \textit{Prescribing Gaussian and geodesic curvatures on disks},
   \textit{Advanced Nonlinear Studies} \textbf{18} (2018), 453-468.
   
   
   \bibitem{DGHM}
   A. DelaTorre, MdM. González, A. Hyder and L. Martinazzi,
   \textit{Concentration phenomena for the fractional $Q$-curvature equation in dimension 3 and fractional Poisson formulas.}
   \textit{London Math. Soc.}, \textbf{2},  0, (2021), 1–29.
    	    
	\bibitem{djadli-malchiodi-Q1}
	Z. Djadli and A. Malchiodi,
	\textit{Existence of conformal metrics with constant Q-curvature},
	\textit{Ann. of Math. (2)} \textbf{168} (2008), no. 3, 813–858.
	
	\bibitem{escobar}
	J.F. Escobar, \textit{The Yamabe problem on manifolds with boundary}, J. Differ. Geom. \textbf{35} (1) (1992), 21–84.
	
	\bibitem{GZ}
	C. R. Graham and M. Zworski. Scattering matrix in conformal geometry. Invent. Math., 152(1):89–118, 2003.
	
	
%		\bibitem{Gursky99}
%M. J. Gursky
%	\textit{The Principal Eigenvalue
%		of a Conformally Invariant Differential Operator,
%		with an Application to Semilinear Elliptic PDE},
%	\textit{Commun. Math. Phys.  } \textbf{207} (1999), 131 – 143
	
	
%		\bibitem{Gursky-Malchiodi}
%M. J. Gursky and A. Malchiodi,
%	\textit{A strong maximum principle for the Paneitz operator and a non-local flow for the Q-curvature.},
%	\textit{J. Eur. Math. Soc. 17} \textbf{168} (2015), no. 9, 2137–2173.
	
%	\bibitem{LeeParker}
%	J. M. Lee and T. H. Parker. 
%	\textit{The Yamabe problem,} \textit{Bull. Amer. Math. Soc.} (N.S.), 17(1), (1987) 37–91.
	
	\bibitem{liu-xu}
	P. Liu, L. Xu,
	\textit{A small note on symmetric geodesic curvature on $D^2$},
	J. Math. Anal. Appl. 322, 2006, pp. 489-494.
	
	\bibitem{malchiodi-struwe2006}
	A. Malchiodi and M. Struwe,
	\textit{Q-curvature flow on S4},
	\textit{J. Differential Geom.} \textbf{73} (2006), no. 1, 1–44.
	
	\bibitem{moser73} 
	J. Moser, 
	\textit{On a non-linear problem in differential geometry}, 
	Dynamical Systems, Academic Press, New York (1973), 273–280.
	
	\bibitem{ndiaye2008}
	C.B. Ndiaye,
	\textit{Conformal metrics with constant Q-curvature for manifolds with boundary},
	\textit{Commun. Anal. Geom.} \textbf{16} (2008), no. 5, 1049–1124.
	
	\bibitem{ndiaye2009}
	C.B. Ndiaye,
	\textit{Constant T-Curvature conformal metrics on 4-manifolds with boundary},
	\textit{Pacific J. Math.} \textbf{240} (2009), 151-184.
	
	\bibitem{ndiaye2011}
	C.B. Ndiaye,
	\textit{Q-Curvature flow on 4 manifolds with boundary},
	\textit{Math. Z.} \textbf{269} (2011), 83-114.
	
	\bibitem{paneitz1983}
	S. Paneitz,
	\textit{A quartic conformally covariant differential operator for arbitrary pseudo-Riemannian manifolds},
	\textit{SIGMA Symmetry Integrability Geom. Methods Appl.} \textbf{4} (2008), Paper 036, 3 pp.
	
	\bibitem{ruiz}
	D. Ruiz., \textit{Conformal metrics of the disk with prescribed gaussian and geodesic curvatures.} To appear in \textit{ Mathematische Annalen.}
	Preprint available at arXiv:2309.07735.
	\bibitem{Trudinger}
	 N. S. Trudinger, \textit{ On imbeddings into Orlicz spaces and some applications}, \textit{J. Math. Mech.} \textbf{17}
	(1967), 473–483.
	\bibitem{wei-xu1998}
	J. Wei and X. Xu,
	\textit{On conformal deformations of metrics on $S^n$},
	\textit{J. Funct. Anal.} \textbf{157} (1998), no. 1, 292-325.
	

\end{thebibliography}
\end{document}